\begin{document}
\renewcommand*{\arraystretch}{1.3}

\noindent {\bf \large Stochastic dynamic programming with non-linear 
discounting }

\begin{center}{\bf  Nicole B\"auerle $^{a}$, Anna Ja\'skiewicz$^{b}$, Andrzej S. Nowak$^{c}$ } \end{center}
\noindent$^{a}$Department of Mathematics, Karlsruhe Institute of Technology, Karlsruhe, Germany\\ 
{\footnotesize {\it email: nicole.baeuerle@kit.edu}}\\
\noindent$^{b}$Faculty of Pure and Applied Mathematics, Wroc{\l}aw University of Technology,  Wroc{\l}aw, Poland \\
{\footnotesize {\it email: anna.jaskiewicz@pwr.edu.pl}}\\
\noindent$^{c}$Faculty of Mathematics, Computer Science, and Econometrics,  University of Zielona G\'ora,  Zielona G\'ora, Poland \\
{\footnotesize {\it email: a.nowak@wmie.uz.zgora.pl}}\\

\begin{abstract}

In this  paper, we study a Markov decision process with
a non-linear discount function and with a Borel state space. 
We define a recursive discounted utility, which resembles non-additive
utility functions considered in a number of models in economics. 
Non-additivity here follows from non-linearity of the discount function. 
Our study is complementary to the work of Ja{\'s}kiewicz, Matkowski
and Nowak \cite{jmn1} (Math. Oper. Res. 38 (2013),  108-121), 
where also non-linear discounting is used in the stochastic setting, but the expectation of utilities aggregated  on
the space of all histories of the process is applied leading to a 
non-stationary dynamic programming model. 
Our aim is to prove that in the recursive discounted utility case the 
Bellman equation has a solution
and there exists an optimal stationary policy for the problem in the infinite time horizon. 
Our approach includes  two cases: $(a)$ when the one-stage utility is bounded
on both sides by a weight function multiplied by some positive and negative constants, 
and $(b)$ when the  one-stage utility is unbounded  from below.

\keywords{Stochastic dynamic programming \and 
 Non-linear  discounting \and Bellman equation
Optimal stationary policy }

\end{abstract} 
\subclass{90C40 \and 60J05 \and 90C39 \and  90B32 \and 
90B62}

\section{Introduction}
Discounting future benefits and costs is crucial in order 
to determine fair prices for investments and projects,
in particular, when long time horizons come into play like 
for example in the task to price carbon emissions. 
The idea behind pricing carbon emissions is 
to add up the cost of the economic damage caused by this emission for the society. 
Up to now the value of one ton $CO_2$ emissions 
varies quite heavily between countries, ranging for instance from $\$ 119.43$ 
in Sweden to $\$ 2.69$ in Japan (see \cite{carbon}). 
This is of course only partly due to the use of 
different discount functions, but nevertheless emphasises the role of discounting. 

The traditional discounting with a constant discount factor can be traced back to \cite{sam}. 
This is still the most common way to discount benefits and cost,  in particular because it simplifies the computation.  
However, Koopmans  \cite{k} gave an axiomatic characterisation 
of a class of recursive utilities, 
which also includes the classical way of discounting. 
He introduced an aggregator $W$ to aggregate the current utility $u_t$ with future ones 
$v_{t+1}$ in  $v_t=W(u_t,v_{t+1})$. When we choose $W(u,v)=u+\beta v,$ we get the classical discounting with a discount factor $\beta$.

In this  paper, we study a Markov decision process with a Borel state space, 
unbounded stage utility  and with a non-linear discount function $\delta$ which has certain properties. 
We use an aggregation of the form
$$ v_t = u_t + \int \delta\big(v_{t+1}(\cdot,x_{t+1})\big) q(x_{t+1}|x_t,\pi_t(h_t))$$
where  $q$ is the transition kernel of the Markov decision process, $\pi_t$ is the decision at time $t$ 
and $h_t$ the history of the process.  When $\delta(x)=\beta x$ we are back in the classical setting. 
In this case, it is well-known how to solve Markov decision process with an infinite time horizon, 
see for example \cite{br11,b,fs,hll,schal,slp}. 
In the unbounded utility case, the  established method is to use a weighted supremum norm 
and combine it with Banach's contraction theorem (see e.g. \cite{br11,b3b,b3,duran,hl}). 
In our setting the Banach contraction principle cannot be applied. 
Indeed our paper is in the spirit of \cite{jmn1,jmn2,jmn3}, where also non-linear discounting was used 
and an extension of the Banach theorem due to Matkowski \cite{m} was applied. 
Whereas  papers \cite{jmn2,jmn3} consider a purely deterministic decision process, work
\cite{jmn1} treats a stochastic problem. However, in \cite{jmn1}  the expectation is the final operator 
applied at the end of aggregation, whereas in the present paper expectation- and discounting-operators alternate. 
As will be explained in Section 3 this has the advantage that we get optimal stationary policies in our setting. 

The main result of our paper is a solution procedure of these new kind of discounting problems with stochastic transition. 
In particular, we provide an optimality equation, show that it has a solution
and prove that there exists an optimal stationary policy for the problem in the infinite time horizon. 
Note that we allow  the utility function  to be unbounded.

The outline of our paper is as follows. We introduce our model data together with the assumptions in Section 2. 
In Section 3, we present our optimisation problem. Particularly, we explain how the utility is aggregated in our model 
and what precisely the difference to  the model and results in \cite{jmn1} is. 
In Section 4, we summarise some auxiliary results like a measurable selection theorem and a generalised 
fixed point theorem, which is used later. 
Next, in Subsection 5.1 we treat the model, where the positive and negative part of the one-stage utility is bounded 
by a weight function $\omega.$ We show in this case that the value function $v^*$  
is a unique fixed point of the corresponding maximal reward operator and that every maximiser 
in the Bellman equation for $v^*$ defines an optimal stationary policy. 
In Subsection 5.2, we consider then the setting, where the utility function is unbounded from below, 
but still bounded from above by the weight function $\omega.$ Here, 
we can only show that the value function $\underline{v}^*$ is a fixed point  of the corresponding maximal reward operator, 
but examples show that the fixed point is not necessarily unique. 
Anyway, as in Section 5.1, any maximiser in the Bellman equation for
 $\underline{v}^*$  defines again an optimal stationary policy. 
 The proof employs an approximation of $\underline{v}^*$ by a monotone sequence of value functions, 
 which are bounded by a weight function $\omega$ in absolute value like in Subsection 5.1. 
 In Section 6, we briefly discuss two numerical algorithms for 
the solution of problems in Subsection 5.1, namely policy iteration and policy improvement. 
 The last section presents some applications. 
We discuss two different optimal growth models,  an inventory problem and a stopping problem.

\section{The dynamic programming model }

 Let  $\mathbb{N}$ ($\mathbb{R}$)  denote the set
of all positive integers (all real numbers)  and  $\underline{\mathbb{R}}=\mathbb{R}\cup\{-\infty\},$  $\mathbb{R}_+ =[0,\infty).$  
A {\it Borel space} $Y$ is a non-empty Borel subset of a complete
separable metric space.  By ${\cal B}(Y)$ 
we denote the $\sigma$-algebra of all Borel subsets of $Y $ 
and we write ${\cal M}(Y)$ to denote the set of all Borel measurable 
functions $g:Y\to \underline{\mathbb{R}}.$ 

A {\it discrete-time Markov decision process}  is specified by the following objects:
\begin{itemize}
\item[(i)] $X$ is the {\it state space} and is assumed to be a  Borel space.
\item[(ii)] $A$ is the {\it action space} and is assumed to be a  Borel
space.
\item[(iii)] $D$ is a non-empty Borel subset of $X\times A.$ We assume that for each $x\in X,$ the non-empty $x$-section
$$A(x):=\{a\in A: (x,a)\in D\}$$
of $D$ represents the set of {\it actions available in state} $x.$ 
\item[(iv)] $q$ is a  {\it transition probability} from $D$ to $X.$ For each $B \in {\cal B}(X)$, $q(B|x,a)$ 
is the probability that the new state is in the  set $B$, given the current state is 
$x\in X$ and an action $a\in A(x)$ has been chosen.
\item[(v)] $u\in {\cal M}(D)$ is a {\it one-period utility function.}
\item[(vi)] $\delta: \underline{\mathbb{R}}\to \underline{\mathbb{R}}$ is a {\it discount function.}
\end{itemize}

Let $D^{n}:= D\times\cdots\times D$ ($n$ times)  for $n\in \mathbb{N}.$ 
Let $H_1=X$ and $H_{n+1}$ be the space of {\it admissible histories} up to the $n$-th transition, i.e.,
$H_{n+1}:= D^n\times X$  for $n\in \mathbb{N}.$
An element of $H_n$ is called a partial history of the  process.
We put $H=D\times D\times\cdots $
and assume that $H_n$ and $H$ are equipped with the product
 $\sigma$-algebras.

In this paper, we restrict ourselves to deterministic policies, since randomisation
does not give any advantage from the point of view of utility maximisation.
 A {\it  policy} $\pi$ is a sequence $(\pi_n)$ of decision rules where,
for  every $n\in\mathbb{N},$ $\pi_n$  is a
Borel  measurable mapping, which associates any admissible history 
$h_n\in H_n$  $(n\in \mathbb{N})$ with an action
$a_n\in A(x_n).$  
We write $\Pi$ to denote the {\it set of
all policies}. Let $F$ be the set of all Borel measurable mappings $f:X\to A$ such that 
$f(x)\in A(x)$ for every $x\in X.$ 
When $A(x)$ is compact for each $x\in X$, then 
from  the Arsenin-Kunugui result (see Theorem
 18.18 in \cite{kech}), it follows that 
$F\not=\emptyset.$
   A  policy $\pi$ is called {\it stationary} if $\pi_n=f$ for all $n\in
	\mathbb{N}$ and some 
 $f\in F.$ Therefore, a stationary policy $\pi=(f,f,...)$ will be identified with  $f\in F$
 and the {\it set of all stationary policies} will be denoted by $F.$ 

\subsection{\bf Assumptions with comments}

Let 
 $\omega:X\to[1, \infty)$  be a fixed Borel measurable function.\\

\noindent
{\bf Assumptions (A):} \\
$(A2.1)$  there exists $b>0$ such that
$$u(x,a)\ge -b\omega (x)\quad \mbox{ for all } (x,a)\in D,$$
$(A2.2)$ there exists $c>0$ such that
$$u(x,a)\le c\omega (x)\quad \mbox{ for all } (x,a)\in D.$$\\

Our next assumption is on the discount function $\delta.$\\

\noindent
{\bf Assumptions (B):}\\
$(B2.1)$ there exists an   increasing function
$\gamma:\mathbb{R}_+ \to \mathbb{R}_+$ such that
$\gamma(z)<z$ for each $z>0$  and
   $$           |\delta(z_1)-\delta(z_2)|\le \gamma(|z_1-z_2|)$$
for all $z_1, z_2 \in \mathbb{R}.$\\
$(B2.2)$ $\delta$ is increasing, $\delta(0)=0$ and $\delta(-\infty)=-\infty$. \\ 
$(B2.3)$ $(i)$ $\gamma$ is subadditive, i.e., $\gamma(y+z)\le \gamma(y)+\gamma(z)$ 
for all $y,\ z\ge 0,$ and 

$\quad\;\; (ii)$ $\gamma(\omega(x)y)\le \omega(x)\gamma(y)$ for all $x\in X$ and $y> 0.$\\
$(B2.4)$ it holds that 
$$\int_X \omega(y)q(dy|x,a)\le \alpha\omega (x) \mbox{ for all } (x,a)\in D$$
$\quad$ and either 

$(i)$ $\alpha\le 1$ or 

$(ii)$  $\alpha>1$ and $\alpha\gamma(x)<x$ for all $x\in(0,+\infty).$\\

\begin{remark}
\label{rem1}
In some empirical studies it was observed that negative and  positive utilities were
discounted by different discount factors (``sign effect''). Therefore a simple non-linear discount function is $\delta(y) = \delta_1y$ for $y\le 0$
and $\delta(y) = \delta_2y$ for $y>0,$ where 
$\delta_1, \  \delta_2\in (0,1)$ and   $\delta_1 \ \not= \delta_2.$
For a discussion and interpretation of this and other types of discount functions the reader is referred to 
Ja\'skiewicz,  Matkowski and Nowak  \cite{jmn2}. 
Additional examples of discount functions are also  given in Section \ref{sec:appl}. 
\end{remark}
\begin{remark}
\label{rem2}
Obviously,  assumption $(B2.1)$ implies that $\gamma(0)=0 $
and $\gamma$ is continuous at zero.  
Moreover, it implies that $\delta$ is continuous and (with $(B2.2)$) 
that $|\delta(z)|\le \gamma(|z|)$ for all $z\in\mathbb{R}$.  From $(B2.3),$  it follows that
\begin{equation}
\label{niersubadd}
|\gamma(y)-\gamma(z)|\le \gamma(|y-z|)\quad\mbox{for all}\quad 
 y,\ z\ge 0.
\end{equation}
This fact and continuity of $\gamma$ at zero, implies that $\gamma$ is continuous at any point in $\mathbb{R}_+.$ 
\end{remark}

\begin{remark}
\label{remincr}
Assumption $(B2.3)$  holds if the function $z\mapsto \gamma(z)/z$ is non-increasing on $(0,\infty).$ Note that under this condition we have
$\frac{\gamma(y+z)}{y+z} \le \frac{\gamma(y )}{y}$ and 
$\frac{\gamma(y+z)}{y+z}\le \frac{\gamma( z)}{ z }.$ Hence, $\gamma(y+z)\le \gamma(y)+\gamma(z).$ 
Moreover, for any $d\ge 1  $ and $z>0,$ $\gamma(dz)/dz \le \gamma(z)/z.$ Thus
$\gamma(dz)\le d\gamma(z).$ 
Take $d=\omega(x).$ 
Then $(ii)$ in $(B2.3)$ holds.  
\end{remark}

\begin{remark}
\label{nonincrgam}
There are subadditive functions $\gamma$ such that $z\mapsto \gamma(z)/z$ is
not necessarily non-increasing. 
An example of such a subadditive  function is
$\gamma(x)=(1-\varepsilon)x+\varepsilon |\sin x|$ for some $\varepsilon\in (0,1).$
\end{remark}

The following two standard sets of assumptions  will be used alternatively.\\

\noindent
{\bf Assumptions (W):} \\
$(W2.1)$    $A(x)$ is compact for every $x\in X$ and the set-valued mapping $x\mapsto A(x)$ is upper semicontinuous, i.e.,
$\{x\in X:\; A(x)\cap K \not=
\emptyset\}$ is closed for each closed set
$K\subset A,$ \\
$(W2.2)$ the function $u$ is  upper semicontinuous
on $D,$\\
$(W2.3)$ the transition probability  $q$ is weakly continuous, i.e.,
$$(x,a) \mapsto\int_X \phi(y)q(dy|x,a)$$
is continuous  on $ D$ for each bounded continuous function $\phi$, \\
$(W2.4)$  the function $\omega$ is continuous on $X$.\\
$(W2.5)$ the function
$$(x,a) \mapsto\int_X \omega(y)q(dy|x,a)$$
is continuous  on $ D.$\\

\noindent
{\bf Assumptions (S):} \\
$(S2.1)$ $A(x)$ is compact for every $x\in X$, \\
$(S2.2)$ the function $u(x,\cdot)$ is  upper semicontinuous
on $A(x)$ for every $x\in X,$\\
$(S2.3)$ for each $x \in X$ and every Borel set $\widetilde{X} \subset X,$
the function $q(\widetilde{X}|x,\cdot)$ is continuous on $A(x),$ \\
$(S2.4)$ the function $a\mapsto\int_X \omega(y)q(dy|x,a)$ is continuous on $A(x)$ for every $x\in X.$\\

The above conditions were used in stochastic dynamic programming 
by many authors, see, e.g., 
Sch{\"a}l \cite{schal},  B\"auerle and  Rieder 
\cite{br11},  Bertsekas and Shreve \cite{bs} or Hern\'andez-Lerma and 
Lasserre  \cite{hll,hl}. Using the so-called ``weight'' or ``bounding''  
function $\omega$ one 
can study dynamic programming models with unbounded one-stage utility $u.$ 
This method was introduced by Wessels 
\cite{wjmaa}, but as noted by van der Wal  
\cite{vdw}, in the dynamic programming with linear discount function $\delta(z)=\beta z$, 
one can introduce an extra  state $x_e\not\in X$, re-define the transition probability 
and the utility function to obtain an equivalent  ``bounded model''. 
More precisely, we consider a new state space $X\cup\{x_e\},$ 
where $x_e$ is an absorbing isolated state. 
Let $A(x_e)=\{a_e\}$ with an extra action 
$a_e\not\in A.$ For $x\in X$ the action sets are $A(x)$. The transition probabilities $Q$
and one-stage utilities $R$ in a new model are as follows
$$Q(B|x,a):=\frac 1{\alpha \omega(x)}\int_B\omega(y)q(dy|x,a), 
\mbox{ for } B\in{\cal} B(X),\quad Q(x_e|x,a):=1-\frac{\int_X\omega(y)q(dy|x,a)}{\alpha\omega(x)},$$
$$R(x,a):=\frac{u(x,a)}{\omega(x)} , \mbox{ for }  (x,a)\in D,\quad R(x_e,a_e):=0.$$
Here, $\alpha$ is a constant from assumption $(B2.4).$
This transformed Markov decision process is equivalent to the original 
one in the sense that every policy gives the 
same total expected discounted payoff up to the factor 
$\omega (x)$, where $x\in X$ denotes the initial state.
We would like to emphasise that in the non-linear discount function 
case such a transformation to bounded case is not possible. We need to do some extra work.

\begin{remark}
\label{ubd} If $u$ is bounded from above by some constant, then we can skip assumptions $(A)$
and it is enough in assumptions $(B)$ to require 
$(B2.1),$ $(B2.2)$ and $(B2.3)(i)$.
In this case, it suffices to put  $\alpha =1,$ $\omega(x)=1$ for all $x\in X$ and 
it is easily seen that
$(B2.3)(ii)$, $(B2.4)$,  $(S2.4)$, $(W2.4)$ and $(W2.5)$ hold.  
If, on the other hand, $u$ is unbounded  
in the sense that there exists a function
$\omega$ meeting  conditions $(A2.2),$ then 
$\omega(x)\ge 1$ for all $x\in X$ must be unbounded as well. 
From Remark \ref{remincr}, it follows that 
$(B2.3)$ holds when the function  $z\mapsto \gamma(z)/z$ is non-increasing.    
We would like to emphasise that condition $(ii)$ in $(B2.3)$ 
is crucial in our proofs in the case when $(A2.2)$ holds  with 
an unbounded function $\omega.$ 
The dynamic programming problems when
only $(A2.2)$ is assumed  can be solved by 
a ``truncation method'' and  then making use of an approximation 
by solutions for models that satisfy conditions $(A).$
\end{remark}

\section{\bf Discounted utility evaluations: two alternative approaches}

Let $r_1(x,a)=u(x,a)$ for  $(x,a)\in D $ and, for any $n\in \mathbb{N} $ 
and $(h_{n},a_n)= 
(x_1,a_1,\ldots,x_n,a_n)\in D^n,$ 
\begin{eqnarray*}
\label{wzor1}
 & & r_{n+1}(h_{n+1},a_{n+1})\\   
&=& u(x_1,a_1)+\delta\big(u(x_2,a_2)+\delta\big(u(x_3,a_3)+\cdots+\delta(u(x_{n+1},a_{n+1})\big)\cdots)\big)\\  
 &=&u(x_1,a_1)+\delta(r_{n}(x_2,a_2,\ldots,x_{n+1},a_{n+1})).  
\end{eqnarray*}
Below in this subsection we assume that all expectations (integrals) and limits exist. 
In the sequel, we shall study cases where this assumption is satisfied.

Let $\pi\in\Pi $ and $x=x_1\in X$ be an initial state. By  $\mathbb{E}_x^\pi$ 
we denote the  expectation operator with respect to the unique probability measure 
$\mathbb{P}^\pi_x$ on $H$ induced 
by the  policy $\pi\in\Pi$ and the transition probability  $q$ according 
to the Ionescu-Tulcea theorem, see Proposition 7.28 in \cite{bs}.

\begin{definition}
\label{du} 
For any $\pi=(\pi_k) \in\Pi$  and any initial state $x=x_1$   
the $n$-stage expected discounted utility  is 
$$ R_n(x,\pi) = \mathbb{E}_x^\pi\big[r_n(x_1,a_1,...,x_n,a_n)\big] $$
and the expected discounted utility over an infinite horizon is 
\begin{equation}
\label{duinf}
R(x,\pi):= \lim\limits_{n\to\infty} R_n(x,\pi).
\end{equation}
A policy $\pi^*\in \Pi$ is optimal in the dynamic programming  model under
utility evaluation (\ref{duinf}), if
$$R(x,\pi^*) \ge R(x,\pi)\quad\mbox{  for all}\quad  \pi\in\Pi,\ x\in X.$$
\end{definition}

\begin{remark}
\label{remdu} Utility functions as in (\ref{duinf}) have been  considered by 
Ja\'skiewicz, Matkowski and Nowak \cite{jmn1}. Optimal policies have been  shown to exist
for the model with   $u$ {\it bounded from above} 
satisfying   assumptions (A) and either  (W) or (S).  However, optimal policies
obtained in \cite{jmn1}  are 
{\it history-dependent} and are characterised by an infinite system of
Bellman equations as in the non-stationary model of Hinderer \cite{h}.  
\end{remark}

To obtain {\it stationary optimal policies}, we shall define a {\it recursive
discounted utility} using the ideas similar to those developed in  papers on
dynamic programming  by Denardo \cite{d} and Bertsekas \cite{ber} and in papers
on economic dynamic optimisation \cite{balbus,jet,b3b,b3,duran,miao,streu,weil}. 
The seminal article for these studies was the work by Koopmans
\cite{k} on stationary recursive utility generalising the standard discounted utility of Samuelson \cite{sam}. 
To define the recursive discounted utility we must introduce some operator notation.

Let $\pi=(\pi_1,\pi_2,...)\in \Pi$ and   $v\in {\cal M}(H_{k+1})$. We set
$$Q_{\pi_k}^\delta v(h_k)=\int_X\delta(v(h_{k},\pi_k(h_k),x_{k+1}))q(dx_{k+1}|x_k,
\pi_k(h_k)) $$
and 
\begin{eqnarray*}
T_{\pi_k}v(h_k)&=&u(x_k,\pi_k(h_k)) +Q_{\pi_k}^\delta v(h_k)\\
 &=&u(x_k,\pi_k(h_k)) +\int_X\delta(v(h_{k},\pi_k(h_k),x_{k+1}))q(dx_{k+1}|x_k,
\pi_k(h_k)).
\end{eqnarray*}
These operators are well-defined for example when $u$ and $v$ are bounded from above.

Similarly, we define $Q_{\pi_k}^\gamma$ with $\delta$ replaced by $\gamma$.
Observe that by $(B2.1)$
$$
|Q_{\pi_k}^\delta v(h_k)|\le Q_{\pi_k}^\gamma |v|(h_k)
$$
provided that $Q_{\pi_k}^\delta v(h_k)>-\infty.$  
This fact will be used frequently.

The interpretation of $T_{\pi_k}v(h_k)$ is as follows. If 
$ x_{k+1} \mapsto v(h_{k},\pi_k(h_k),x_{k+1})$ is a ``continuation value'' of utility, then $T_{\pi_k}v(h_k)$ is the
expected discounted utility given the pair $(x_k,\pi_k(h_k)).$

The composition $T_{\pi_1}\circ T_{\pi_2}\circ\cdots \circ  T_{\pi_n} $ of the operators
$T_{\pi_1},T_{\pi_2},\ldots, T_{\pi_n}$ is for convenience denoted by
$T_{\pi_1}T_{\pi_2}\cdots T_{\pi_n}. $

Let $\bf 0$ be a function that assigns zero to each argument $y\in X.$ 

\begin{definition}
For any $\pi=(\pi_k) \in\Pi$  and any initial state $x=x_1,$ 
the $n$-stage recursive discounted utility  is defined as  
$$ U_n(x,\pi)=  T_{\pi_1} \cdots T_{\pi_n}{\bf 0}(x) $$
 and the recursive   discounted utility over an infinite horizon is 
\begin{equation}
\label{recduinf}
U(x,\pi):= \lim\limits_{n\to\infty} U_n(x,\pi).
\end{equation}
A policy $\pi^*\in \Pi$ is optimal in the dynamic programming  model under
utility evaluation (\ref{recduinf}), if
\begin{equation}\label{eq:problem3} U(x,\pi^*) \ge U(x,\pi)\quad\mbox{  for all}\quad  \pi\in\Pi,\ x\in X.
\end{equation}
\end{definition}

For  instance, below we give a full formula for $n=3.$ Namely,
\begin{eqnarray*}
& & U_3(x,\pi)=T_{\pi_{ 1}}T_{\pi_{2}}T_{\pi_3}{\bf 0}(x)
= u(x,\pi_1(x)) 
+\int_X\delta \Big(u(x_{2},\pi_{2}(x,\pi_1(x),x_{2}))\\
& +&  
  \int_X\delta(u(x_{3},\pi_3(x,\pi_1(x),x_{2},
\pi_{2}(x,\pi_1(x),x_{2}),x_3))q(dx_{3}|x_{2},
\pi_{2}(x,\pi_1(x),x_{2})))\Big)\\
& & q(dx_2|x,\pi_1(x)).
\end{eqnarray*}

We would like to point  out that in the special case of linear discount function 
$\delta(z)=\beta z$ with $\beta\in(0,1),$ the two above-mentioned approaches
coincide. In that case we deal with the usual expected  discounted utility, because
$$ R_n(x,\pi)= U_n(x,\pi) = \mathbb{E}_x^\pi\left[\sum_{k=1}^n \beta^k u(x_k,a_k)\right].$$

 \section{ Auxiliary results}

Let $Y$ be a Borel space.
By ${\cal U}(Y)$ we denote the space of all upper semicontinuous functions on $Y.$ 
We recall some results on measurable selections, a generalisation of the Banach fixed point theorem 
and present a property of a subadditive function $\gamma.$
 
\begin{lemma}
\label{lem1}  Assume that $A(x)$ is compact for each $x\in X.$ \\
 $(a)$ Let   $g\in {\cal M}(D)$ be  
 such that $a\mapsto g(x,a)$ is upper
semicontinuous on $A(x)$ for each $x\in X.$ Then,
$$ g^*(x):= \max\limits_{a\in A(x)}g(x,a)$$
is Borel measurable and there exists a Borel measurable mapping $f^*:X\to A$
such that $$f^*(x)\in {\rm Arg}\max_{a\in A(x)}g(x,a)
$$
for all $x\in X.$ \\
$(b)$ If, in addition, we assume that $x\mapsto
A(x)$ is upper semicontinuous and $g\in {\cal U}(D),$ then $g^*\in {\cal U}(X).$ 
\end{lemma}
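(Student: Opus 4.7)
The plan is to treat parts $(a)$ and $(b)$ in turn, using standard measurable-selection theorems and a Berge-type maximum theorem.

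For part $(a)$, the main issue is that the naive identity
$$\{x\in X: g^*(x)\ge c\}={\rm proj}_X\{(x,a)\in D: g(x,a)\ge c\}$$
only yields analytic measurability, since projections of Borel sets are in general analytic rather than Borel. To upgrade to Borel measurability, I would invoke the Arsenin--Kunugui projection theorem (Theorem 18.18 in \cite{kech}, already cited in the paper). For each $c\in\mathbb{R}$, the set $E_c:=\{(x,a)\in D: g(x,a)\ge c\}$ is Borel since $g\in {\cal M}(D)$, and its $x$-section $\{a\in A(x): g(x,a)\ge c\}$ is closed in the compact set $A(x)$ (by upper semicontinuity of $g(x,\cdot)$) and therefore compact. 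Compact sections are in particular $K_\sigma$, so Arsenin--Kunugui gives that ${\rm proj}_X E_c$ is Borel; hence $g^*$ is Borel measurable. For the selector, the argmax multifunction $\Psi(x):=\{a\in A(x): g(x,a)=g^*(x)\}$ has non-empty compact values (upper semicontinuity of $g(x,\cdot)$ on compact $A(x)$), and its graph $\{(x,a)\in D: g(x,a)-g^*(x)\ge 0\}$ is Borel once the measurability of $g^*$ is established. A second appeal to the selection form of Arsenin--Kunugui then produces a Borel measurable $f^*:X\to A$ with $f^*(x)\in \Psi(x)$ for all $x\in X$.

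For part $(b)$, I would argue as in the classical Berge maximum theorem. Fix $x_0\in X$ and $\varepsilon>0$. For each $a\in A(x_0)$, upper semicontinuity of $g$ at $(x_0,a)$ supplies open neighborhoods $V_a\ni x_0$ and $U_a\ni a$ such that $g(y,b)<g(x_0,a)+\varepsilon\le g^*(x_0)+\varepsilon$ for all $(y,b)\in V_a\times U_a$. Since $A(x_0)$ is compact, finitely many $U_{a_1},\ldots,U_{a_m}$ cover it; set $U:=\bigcup_{i=1}^m U_{a_i}$ and $V:=\bigcap_{i=1}^m V_{a_i}$. Upper semicontinuity of $A(\cdot)$ yields a neighborhood $V'$ of $x_0$ with $A(y)\subset U$ for $y\in V'$. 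Then for every $y\in V\cap V'$ and every $a\in A(y)$, $a$ lies in some $U_{a_i}$, whence $g(y,a)<g^*(x_0)+\varepsilon$; taking the maximum over $a\in A(y)$ gives $g^*(y)\le g^*(x_0)+\varepsilon$. Thus $g^*$ is upper semicontinuous at $x_0$, and since $x_0$ was arbitrary, $g^*\in {\cal U}(X)$.

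The main technical obstacle is in part $(a)$: the joint Borel/upper-semicontinuous structure alone is not enough for elementary projection arguments to yield Borel measurability, and the Arsenin--Kunugui theorem is the exact tool needed. Since the paper has already cited this result to establish $F\neq\emptyset$, invoking it again here is the most direct route to both the measurability of the optimal value and the existence of a Borel selector.
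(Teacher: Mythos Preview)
Your proposal is correct. The paper itself does not give a proof but simply records that part $(a)$ follows from Corollary~1 in Brown--Purves \cite{bp} and that part $(b)$ is a consequence of Berge's maximum theorem (pp.~115--116 in \cite{berge}; see also Proposition~10.2 in \cite{schal}). Your argument for $(b)$ is exactly the standard proof of the upper-semicontinuity half of Berge's theorem, so there is no real difference there. For $(a)$ you take a slightly different route: rather than invoking the Brown--Purves selection-of-extrema result directly, you first establish Borel measurability of $g^*$ by observing that the superlevel sets of $g$ have compact (hence $K_\sigma$) $x$-sections and applying the Arsenin--Kunugui projection theorem, and then apply the uniformization half of Arsenin--Kunugui to the argmax correspondence. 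Both routes exploit the same structural fact---compact sections upgrade analytic projections to Borel and guarantee Borel selectors---so the difference is largely one of packaging; your version has the minor advantage of being fully self-contained given that Arsenin--Kunugui is already cited in the paper.
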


Part $(a)$  follows from Corollary 1 in \cite{bp}. Part 
$(b)$ is a corollary to Berge's maximum
theorem, see pages 115-116 in \cite{berge} and Proposition 10.2 in \cite{schal}.\\

Let  ${\cal M}^a_b(X)$ be the space of all
functions   $ v\in {\cal M}(X) $  such that 
$x\mapsto v(x)/\omega(x)$ is bounded from above on $X$.
The symbol ${\cal M}^d_b(X)$ is used for the subspace of functions   
$v \in {\cal M}^a_b(X) $ such that $x\mapsto |v(x)|/\omega(x)$ is bounded
on $X.$ 
Let ${\cal M}^a_b(D)$ be the space of all
functions   $ w\in {\cal M}(D) $  such that
$(x,a)\mapsto w(x,a)/\omega(x)$ is bounded from above  on $D.$ 
By ${\cal M}^d_b(D)$ we denote the space of all 
functions   $ w\in {\cal M}_b^a(D) $  such that 
$(x,a)\mapsto |w(x,a)|/\omega(x)$ is bounded  on $D.$ We also define
	$$
	{\cal U}^a_b(X):={\cal M}^a_b(X)\cap {\cal U}(X),\quad
	{\cal U}^d_b(X):={\cal M}^d_b(X)\cap {\cal U}(X),\quad\mbox{and}
	$$
	$$
     {\cal U}^a_b(D):={\cal M}^a_b(D)\cap {\cal U}(D),\quad
	{\cal U}^d_b(D):={\cal M}^d_b(D)\cap {\cal U}(D).	
	$$

\begin{lemma}
\label{lematfin}
Let assumptions $(B)$ be satisfied and 
$$ I(v)(x,a):= \int_X\delta(v(y))q(dy|x,a), \quad 
v\in {\cal M}^a_b(X),\ (x,a)\in D.$$
Then $I(v) \in {\cal M}^a_b(D)$. If $v\in {\cal M}^d_b(X)$, then
$I(v)\in   {\cal M}^d_b(D).$ 
\end{lemma}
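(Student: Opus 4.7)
The plan is to verify the two assertions via a common template: first establish Borel measurability of $I(v)$ on $D$, then extract from $(B2.3)$--$(B2.4)$ an explicit bound of $I(v)(x,a)$ by a constant multiple of $\omega(x)$. For measurability, Remark~\ref{rem2} gives that $\delta$ is continuous on $\underline{\mathbb{R}}$, so $\delta\circ v$ is Borel measurable on $X$; the map $(x,a)\mapsto\int_X\delta(v(y))\,q(dy|x,a)$ is then Borel measurable on $D$ by the standard result on integration with respect to a transition kernel, applied separately to the positive and negative parts of $\delta\circ v$. The upper bound on the positive part that I prove next ensures that the integral is well defined in $[-\infty,+\infty)$.

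For the first assertion I would pick $c>0$ such that $v(y)\le c\omega(y)$ for every $y\in X$ (replacing $c$ by $\max\{c,1\}$ if necessary to secure $c>0$). Wherever $v(y)=-\infty$, $(B2.2)$ gives $\delta(v(y))=-\infty$, which is harmless for an upper bound. Elsewhere, monotonicity of $\delta$ combined with Remark~\ref{rem2} yields
$$\delta(v(y))\le\delta(c\omega(y))\le|\delta(c\omega(y))|\le\gamma(c\omega(y)),$$
and then $(B2.3)(ii)$, applied with the positive real $c$ playing the role of the ``$y$'' in that assumption, gives $\gamma(c\omega(y))\le\omega(y)\gamma(c)$. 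Integrating against $q(\cdot|x,a)$ and invoking $(B2.4)$ produces
$$I(v)(x,a)\le\gamma(c)\int_X\omega(y)\,q(dy|x,a)\le\alpha\gamma(c)\,\omega(x),$$
so $I(v)/\omega$ is bounded above on $D$, that is, $I(v)\in{\cal M}^a_b(D)$.

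For the second assertion I would choose $c>0$ with $|v(y)|\le c\omega(y)$, which in particular forces $v$ to be real-valued. Then Remark~\ref{rem2} gives $|\delta(v(y))|\le\gamma(|v(y)|)\le\gamma(c\omega(y))$, and exactly the same chain as above, using $(B2.3)(ii)$ and $(B2.4)$, shows $|I(v)(x,a)|\le\alpha\gamma(c)\,\omega(x)$, so $I(v)\in{\cal M}^d_b(D)$.

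The only delicate step is in the first assertion: since $v\in{\cal M}^a_b(X)$ is bounded only from above and may take the value $-\infty$, I cannot directly invoke the estimate $|\delta(z)|\le\gamma(|z|)$, which is stated in Remark~\ref{rem2} only for $z\in\mathbb{R}$ and would read as $+\infty$ at $z=-\infty$. The trick is to first use monotonicity of $\delta$ to pass from $v(y)$ to the nonnegative upper envelope $c\omega(y)$, and only then apply the Remark~\ref{rem2} bound together with the multiplicative estimate $(B2.3)(ii)$.
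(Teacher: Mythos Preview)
Your proof is correct and follows essentially the same approach as the paper's. The only minor differences are cosmetic: the paper passes through the positive part $v^+=\max\{v,0\}$ rather than going directly to $c\omega$, and it obtains the final bound via $\gamma(z)<z$ (yielding $\alpha c\,\omega(x)$) rather than via $(B2.3)(ii)$ (yielding your sharper $\alpha\gamma(c)\,\omega(x)$); your added remarks on measurability and the $-\infty$ case are more explicit than the paper but do not change the argument.
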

\begin{proof} 
Let $v\in {\cal M}^a_b(X)$ and $v^+(y)=\max\{v(y),0\}.$ 
Then there exists $c_0>0$ such that $v(y)\le v^+(y) \le c_0\omega(y)$ for all
$y\in X.$ Obviously, we have $I(v)(x,a)\le I(v^+)(x,a) \le I(c_0\omega)(x,a)
\le \alpha c_0\omega(x)$ for all $(x,a)\in D.$
Hence $I(v) \in  {\cal M}^a_b(D).$ Now assume that
 $v\in {\cal M}^d_b(X).$ Then there exists a constant $c_1>0$ such that $|v(y)|\le  c_1\omega(y)$ for all
$y\in X $ and we obtain
$|I(v)(x,a)| \le \alpha c_1\omega(x)$  for all $(x,a)\in D.$ 
Thus  $I(v) \in  {\cal M}^d_b(D).$
 \hfill $\Box$
\end{proof}

Our results will be formulated using the  standard dynamic programming operators.  
For any $v\in {\cal M}^a_b(X),$ put
$$ Sv(x,a):= u(x,a)+\int_X\delta(v(y))q(dy|x,a),\quad (x,a)\in D.$$ 
Next define 
\begin{equation}\label{T}
Tv(x):=\sup_{a\in A(x)} Sv(x,a) 
=\sup_{a\in A(x)}\left[u(x,a)+\int_X \delta(v(y))q(dy|x,a)\right].
\end{equation}
By $T^{(m)}$ we denote the composition of $T$ with itself $m$ times.

If $f\in F$ and $v\in {\cal M}^a_b(X), $ then we put 
\begin{equation}\label{Tf}
T_fv(x):=Sv(x,f(x))=u(x,f(x))+\int_X \delta(v(y))q(dy|x,f(x)).
\end{equation}
Clearly, $T_fv\in {\cal M}^a_b(X).$\\

The next   result  follows  from Lemmas \ref{lem1}-\ref{lematfin} and
Lemmas 8.3.7 and 8.5.5 from \cite{hl}. 

\begin{lemma}
\label{lem2}
Assume that assumptions $(A)$ and $(B)$ hold.\\
$(a)$ 
If  conditions $(W)$ are also satisfied and 
$v\in {\cal U}^d_b(X), $ 
 then  
 $Sv \in {\cal U}^d_b(D)$  and 
$Tv \in {\cal U}^d_b(X). $   \\
$(b)$  If    $(S2.2)$-$(S2.4) $ 
 hold and 
$v\in {\cal M}^d_b(X), $   
then  
 $Sv \in {\cal M}^d_b(D) $ and, for each $x\in X,$ the function
$a\mapsto Sv(x,a)$ is upper semicontinuous on $A(x).$ Moreover,
$Tv \in {\cal U}^d_b(X). $  
\end{lemma}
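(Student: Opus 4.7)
The approach is to decompose $Sv$ as $u + I(v)$, where $I(v)(x,a):=\int_X\delta(v(y))\,q(dy|x,a)$, handle the two pieces separately for their regularity, and then pass to the supremum using the measurable–selection machinery of Lemma \ref{lem1}. The $\omega$-boundedness assertions are essentially free: Lemma \ref{lematfin} gives $I(v)\in\mathcal{M}_b^d(D)$ whenever $v\in\mathcal{M}_b^d(X)$, while $(A2.1)$–$(A2.2)$ bound $u$ in absolute value by a multiple of $\omega$, so in both parts $Sv\in\mathcal{M}_b^d(D)$, and $Tv$ satisfies the required $\omega$-bound as soon as it is known to be Borel measurable. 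The substantive content therefore concerns transferring (upper) semicontinuity through the integral and through the supremum.

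For part (a), I would first observe that by $(B2.2)$ and the continuity of $\delta$ (from $(B2.1)$ and Remark \ref{rem2}), the composition $\delta\circ v$ belongs to $\mathcal{U}(X)$ whenever $v\in\mathcal{U}(X)$. Writing $|\delta(v(y))|\le \gamma(|v(y)|)\le \gamma(c_1\omega(y))\le \omega(y)\gamma(c_1)$, where the last step uses Remark \ref{rem2} and $(B2.3)(ii)$, places $\delta\circ v$ in the class of functions covered by the weighted version of Lemma 8.3.7 of \cite{hl}. Combined with weak continuity of $q$ from $(W2.3)$ and the continuity of $\omega$ and of $(x,a)\mapsto \int_X\omega(y)\,q(dy|x,a)$ from $(W2.4)$–$(W2.5)$, this yields $I(v)\in\mathcal{U}(D)$; adding the upper semicontinuity of $u$ from $(W2.2)$ gives $Sv\in\mathcal{U}_b^d(D)$. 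Compactness of $A(x)$ together with upper semicontinuity of the correspondence $x\mapsto A(x)$ from $(W2.1)$ then allows Lemma \ref{lem1}(b) to deliver $Tv\in\mathcal{U}_b^d(X)$.

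For part (b), weak continuity of $q$ is replaced by the setwise continuity in $(S2.3)$. Here the relevant tool is Lemma 8.5.5 of \cite{hl}: for every fixed $x\in X$, the Borel measurability of $\delta\circ v$, its $\omega$-bound established above, and condition $(S2.4)$ imply that $a\mapsto I(v)(x,a)$ is continuous on $A(x)$. Together with $(S2.2)$ this yields upper semicontinuity of $a\mapsto Sv(x,a)$ on $A(x)$. Compactness of $A(x)$ from $(S2.1)$ and Lemma \ref{lem1}(a) then produce a Borel measurable maximiser and give Borel measurability of $Tv$; combined with the $\omega$-bound from $Sv$, this places $Tv$ in the asserted class.

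The one step that needs genuine care is the verification that $\delta\circ v$ sits inside the class of integrands for which Lemmas 8.3.7 and 8.5.5 of \cite{hl} actually apply, since those lemmas require an $\omega$-type domination rather than outright boundedness. This is precisely the role of the homogeneity-type inequality $(B2.3)(ii)$: without $\gamma(\omega(x)y)\le \omega(x)\gamma(y)$ the composition with $\gamma$ would destroy the $\omega$-linear growth of the bound on $\delta\circ v$, and the upper semicontinuity of the integral term would fail. Everything else in the argument is a direct application of the auxiliary lemmas already recorded.
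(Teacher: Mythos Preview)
Your proposal is correct and follows precisely the route the paper indicates: the paper's proof consists of a single sentence citing Lemmas \ref{lem1}--\ref{lematfin} together with Lemmas 8.3.7 and 8.5.5 of \cite{hl}, and you have simply unpacked those citations---the $\omega$-domination of $\delta\circ v$ via $(B2.3)(ii)$, the passage of upper semicontinuity (resp.\ continuity in $a$) through the integral under $(W)$ (resp.\ $(S)$), and the use of Lemma \ref{lem1} for the supremum. One small remark: in part~(b) the statement as printed has $Tv\in\mathcal{U}_b^d(X)$, which appears to be a typo for $\mathcal{M}_b^d(X)$ (cf.\ the last line of the proof of Theorem~\ref{thm1}); your argument correctly establishes the latter, which is all that is used downstream.
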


\begin{remark}
\label{optf}
$(a)$ The assumption that $\delta$ is continuous and increasing is important for part $(a)$ of Lemma \ref{lem2}.\\
$(b)$  Under assumptions of Lemma \ref{lem2},  in the operator in (\ref{T}) one can replace $\sup$ 
 by $\max $.\\ 
$(c)$ Using Lemma \ref{lematfin}, one can easily see that if $v\in
{\cal M}^a_b(X)$ and $f\in F,$ then $T_fv \in {\cal M}^a_b(X).$ 
 \end{remark}

  The following fixed point theorem 
will play an important role in our proof (see e.g.  \cite{m} or Theorem 5.2 in \cite{dg}).

\begin{lemma}\label{lem4}
Let $(Z,m)$ be a complete metric space, 
$\psi :\mathbb{R}_+ \to \mathbb{R}_+$ be a continuous, increasing function with $\psi(x)<x$ for all $x\in (0,\infty)$. 
If an operator $T:Z\to Z$ satisfies the inequality
$$ m(Tv_1,Tv_2)\le \psi(m(v_1,v_2))$$
for all $v_1,\ v_2\in Z$, then $T$ has a unique fixed point $v^*\in Z$ and $$ \lim_{n\to \infty}m(T^{(n)}v,v^*)=0$$
for each $v\in  Z $. Here $T^{(n)}$ is the composition of $T$ with itself $n$ times.
\end{lemma}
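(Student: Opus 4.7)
The plan is to follow the classical argument for Matkowski's contraction principle, proceeding in three steps: a preliminary statement about iterates of $\psi$, construction of a Cauchy sequence of iterates of $T$, and verification of the fixed-point and uniqueness properties.

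First I would show that $\psi^n(t)\to 0$ for every $t\ge 0$. Because $\psi$ is increasing and $\psi(s)<s$ for $s>0$ (which forces $\psi(0)=0$ by monotonicity and non-negativity), the sequence $(\psi^n(t))$ is non-increasing and bounded below by $0$, hence converges to some $s^*\ge 0$. Continuity of $\psi$ gives $\psi(s^*)=s^*$, so $s^*=0$.

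Next, pick any $v_0\in Z$ and set $v_n:=T^{(n)}v_0$. Iterating the contraction inequality yields $c_n:=m(v_{n+1},v_n)\le\psi^n(c_0)\to 0$. This alone is not enough to conclude that $(v_n)$ is Cauchy, since unlike the Banach case we have no geometric summability available. I would argue by contradiction: if $(v_n)$ were not Cauchy, there would exist $\varepsilon>0$ and indices $n_k<m_k$ with $m(v_{n_k},v_{m_k})\ge\varepsilon$; choosing $m_k$ minimal with this property gives $m(v_{n_k},v_{m_k-1})<\varepsilon$, so together with $c_{m_k-1}\to 0$ and the triangle inequality we obtain $m(v_{n_k},v_{m_k})\to\varepsilon$. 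But
\[
m(v_{n_k},v_{m_k})\le c_{n_k}+m(Tv_{n_k},Tv_{m_k})+c_{m_k}\le c_{n_k}+\psi\bigl(m(v_{n_k},v_{m_k})\bigr)+c_{m_k},
\]
and passing to the limit, using continuity of $\psi$, yields $\varepsilon\le\psi(\varepsilon)<\varepsilon$, a contradiction.

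By completeness, $v_n\to v^*$ for some $v^*\in Z$. The inequality $m(Tv_1,Tv_2)\le\psi(m(v_1,v_2))$ together with $\psi(0)=0$ and continuity at $0$ implies that $T$ is continuous, so $v^*=\lim_n Tv_{n-1}=Tv^*$. Uniqueness follows in one line: if $u$ and $w$ were distinct fixed points, then $m(u,w)=m(Tu,Tw)\le\psi(m(u,w))<m(u,w)$. Running the same argument with an arbitrary $v\in Z$ in place of $v_0$ gives $T^{(n)}v\to v^*$. The main obstacle is precisely the Cauchy-sequence step: without the geometric decay of the Banach setting the distances $c_n$ need not be summable, and the minimal-index trick combined with the strict inequality $\psi(\varepsilon)<\varepsilon$ is the essential idea.
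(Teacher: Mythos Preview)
Your argument is correct: the iterate-decay step, the minimal-index Cauchy argument, and the continuity/uniqueness wrap-up are all sound. Note, however, that the paper does not give its own proof of this lemma at all---it simply quotes the result with references to Matkowski and to Theorem~5.2 in Dugundji--Granas, treating it as a known fixed-point theorem. What you have written is essentially the standard textbook proof of Matkowski's contraction principle (the same argument one finds in Dugundji--Granas), so there is no discrepancy to discuss: you have supplied a full proof where the paper chose to cite one.
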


For the convenience of the reader we formulate and prove 
a modification of  Lemma 8 from  \cite{jmn1} that is used 
many times in our proofs. 
Consider a  function $\psi:\mathbb{R}_+ \to \mathbb{R}_+$ and put  
$$
 \psi_m(z)=z+\psi\big(z+\psi\big(z+\cdots+\psi(z+\psi(z))\cdots\big) \big), 
\ \mbox{ where } \ z>0 \mbox{ appears $m$ times.}
$$

\begin{lemma} \label{ap1} If $\psi$ is increasing,  subadditive and $\psi(y)< y$
for all $y>0,$ then for any $z>0$, there exists 
$$L(z):=\lim_{m\to\infty} \psi_{m}(z)= \sup_{m\ge 1}\psi_{m}(z)<\infty.$$
\end{lemma}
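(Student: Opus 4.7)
The sequence satisfies $\psi_1(z)=z$ and the recursion $\psi_{m+1}(z)=z+\psi(\psi_m(z))$, so my plan is to show that $(\psi_m(z))_m$ is non-decreasing and bounded above, which will yield $\lim_m \psi_m(z)=\sup_m \psi_m(z)<\infty$. Monotonicity is a routine induction: $\psi_2(z)-\psi_1(z)=\psi(z)\ge 0$, and from $\psi_m(z)\ge\psi_{m-1}(z)$ the increasing property of $\psi$ gives $\psi_{m+1}(z)-\psi_m(z)=\psi(\psi_m(z))-\psi(\psi_{m-1}(z))\ge 0$.

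The main step is to produce an a-priori upper bound. I would look for some $L^*>0$ satisfying $L^*\ge z+\psi(L^*)$: such an $L^*$ bounds the sequence by induction, because $\psi_1(z)=z\le L^*$ (since $L^*\ge z+\psi(L^*)\ge z$) and $\psi_m(z)\le L^*$ forces $\psi_{m+1}(z)=z+\psi(\psi_m(z))\le z+\psi(L^*)\le L^*$. To produce $L^*$ I will use subadditivity: iterating $\psi(2L)\le 2\psi(L)$ yields $\psi(2^n)\le 2^n\psi(1)$ for every $n\in\mathbb{N}$, so $2^n-\psi(2^n)\ge 2^n(1-\psi(1))$. The hypothesis $\psi(y)<y$ at $y=1$ gives $\psi(1)<1$, hence the right-hand side tends to $+\infty$, and for $n$ large enough $L^*:=2^n$ does the job. (If $\psi(1)=0$, then subadditivity and monotonicity force $\psi\equiv 0$, in which case $\psi_m(z)=z$ trivially.)

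The obstacle I expect to navigate is the tempting but insufficient bound that one gets by pulling the inner $z$'s out term-by-term with subadditivity, namely $\psi_m(z)\le \sum_{k=0}^{m-1}\psi^{(k)}(z)$. This series can diverge---e.g.\ $\psi(x)=x/(1+x)$ is subadditive with $\psi^{(k)}(z)=z/(1+kz)$, whose series is harmonic---even though the lemma holds in that case (the iteration actually converges to the golden ratio when $z=1$). So the argument must exploit the geometric effect of subadditivity along a doubling sequence, which forces $\psi$ to be dominated by a linear function of slope $\psi(1)<1$ along $L=2^n$, and this is exactly what delivers the fixed-point-type inequality $L^*\ge z+\psi(L^*)$.
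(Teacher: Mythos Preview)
Your proof is correct and takes a genuinely different route from the paper's. Both arguments begin by observing that the recursion $\psi_{m+1}(z)=z+\psi(\psi_m(z))$ makes $(\psi_m(z))_m$ non-decreasing, but they diverge on how to establish boundedness. The paper first bounds the increments by $\psi_m(z)-\psi_{m-1}(z)\le \psi^{(m-1)}(z)$ and then---precisely because this telescoping sum can diverge, as you correctly illustrate with $\psi(x)=x/(1+x)$---resorts to a finer trick: for a given $\epsilon>0$ one uses $\psi^{(m)}(z)\to 0$ to find an index $m$ with $\psi_m(z)-\psi_{m-1}(z)<\epsilon-\psi(\epsilon)$, and then shows inductively that $\psi_{m+k}(z)-\psi_{m-1}(z)\le\epsilon$ for all $k\ge 0$. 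This route implicitly relies on the continuity of $\psi$ (needed to get $\psi^{(m)}(z)\to 0$), which in turn has to be extracted from subadditivity together with $\psi(y)<y$. Your approach bypasses all of this: the doubling estimate $\psi(2^n)\le 2^n\psi(1)$ produces an explicit $L^*=2^n$ satisfying the invariant $L^*\ge z+\psi(L^*)$ as soon as $2^n(1-\psi(1))\ge z$, after which boundedness is a one-line induction. Your argument is shorter, avoids any appeal to continuity or $\epsilon$-management, and yields a quantitative bound $L(z)\le 2^{\lceil \log_2(z/(1-\psi(1)))\rceil}$; the paper's proof gives no explicit estimate, though it has the minor aesthetic advantage of never evaluating $\psi$ at points larger than $L(z)$ itself.
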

 \begin{proof} For any $k\in\mathbb{N},$ let $\psi^{(k)}$ mean  the composition of $\psi$ with itself $k$ times.
 Note that since the function $\psi$ is increasing, then for each $m\ge 1,$ 
$$\psi_{m+1}(z)>\psi_m(z).$$
Hence, the sequence $(\psi_{m}(z))$ is increasing. 
We show that its limit is  finite. 
Indeed, observe that by the subadditivity of $\psi,$ we have 
$$\psi_2(z)-\psi_1(z)=z+\psi(z)-z\le \psi(z),\quad\mbox{and}$$
$$\psi_3(z)-\psi_2(z)=z+\psi(z+\psi(z))-z-\psi(z)\le
\psi^{(2)}(z).$$
By induction, we obtain
$$\psi_{m}(z)-\psi_{m-1}(z)\le \psi^{(m-1)}(z).$$
Let $\epsilon>0$ be fixed. Since $\psi^{(m)}(z)\to 0$ as $m\to\infty$, there exists
$m\ge 1$ such that
$$\psi_m(z)-\psi_{m-1}(z)< \epsilon-\psi(\epsilon).$$
Observe now that from subadditivity of $\psi$   (set $\gamma:=\psi$ in (\ref{niersubadd})), it follows that 
\begin{eqnarray*} 
\psi_{m+1}(z)-\psi_{m-1}(z)&=&
\psi_{m+1}(z)-\psi_{m}(z)+\psi_{m}(z)-\psi_{m-1}(z)\\
&\le& z+\psi(\psi_{m}(z))-z-\psi(\psi_{m-1}(z))+\epsilon-\psi(\epsilon)\\
&\le&\psi(\psi_{m}(z)-\psi_{m-1}(z))+\epsilon-\psi(\epsilon)\\
&\le& \psi(\epsilon-\psi(\epsilon))+\epsilon-\psi(\epsilon)<\psi(\epsilon)+\epsilon-\psi(\epsilon)=\epsilon.
\end{eqnarray*}
By induction, we can easily prove that 
$$\psi_{m+k}(z)-\psi_{m-1}(z)\le \epsilon $$
for all $k\ge 0.$
Hence, $\psi_{m+k}(z)\le \psi_{m-1}(z)+ \epsilon.$ Since
$\psi_{m-1}(z)$ is finite, it follows that $L(z)$ is finite. \hfill
$\Box$
\end{proof}


\section{ Stationary optimal policies in dynamic problems with  the recursive discounted  utilities }\label{optimal}

In this section, we   prove that if $u\in {\cal M}^a_b(D),$
assumptions $(B)$ hold and either conditions $(W)$ or $(S)$  are satisfied, then 
the recursive discounted utility functions (\ref{recduinf}) are well-defined and there exists an optimal  stationary policy.
Moreover, under assumptions $(W)$ ($(S)$),  
the value function $x\mapsto \sup_{\pi\in\Pi}U(x,\pi)$ belongs
to ${\cal U}^a_b(X) $  (${\cal M}^a_b(X)$).
The value function and an optimal policy will be characterised 
 via a single Bellman equation. 
 First we  shall study the case $u \in {\cal M}^d_b(D)$  
and  then apply an approximation technique to the unbounded from below
case.

\subsection{ \bf One-period utilities with bounds on both sides}  

Assume that ${\cal M}^d_b(X)$ is endowed with the so-called
weighted norm $\|\cdot\|_\omega $  defined as
$$\|v\|_\omega:= \sup_{x\in X} \frac{|v(x)|}{\omega(x)},\quad
v\in  {\cal M}^d_b(X).$$
Then ${\cal M}^d_b(X)$ is a Banach space and ${\cal U}^d_b(X)$ is a closed subset of  ${\cal M}^d_b(X),$ if $(W2.4)$ holds. 
The following theorem is the main result of this subsection. Its proof is split in different parts below.\\
  
\begin{theorem}
\label{thm1}
Suppose that assumptions $(A),$ $(B)$ hold and assumptions $(W)$ are satisfied. Then\\
$(a)$ the Bellman equation $Tv=v$ has a unique solution $v^*
\in  {\cal M}^d_b(X)$ and 
$$\lim_{m\to\infty}\|T^{(m)}{\bf 0}-v^*\|_\omega=0\quad\mbox{and}\quad
 v^*(x)=\sup_{\pi\in\Pi}U(x,\pi), \ x\in X,$$
$(b)$ there exists $ f^*\in F$ such that $T_{ f^*}v^*=v^*$
and $f^*$ is an optimal stationary policy for problem (\ref{eq:problem3}),\\
$(c)$ $v^* \in  {\cal U}^d_b(X).$\\
The points (a) and (b) also remain valid  under assumptions $(A)$, $(B)$ and  $(S).$
\end{theorem}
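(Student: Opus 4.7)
The plan is to reduce the theorem to the generalised Banach fixed point theorem (Lemma~\ref{lem4}) applied to $T$ on the weighted Banach space $\bigl({\cal M}^d_b(X),\|\cdot\|_\omega\bigr)$ and then identify the unique fixed point with the value of (\ref{recduinf}). Lemma~\ref{lem2} already tells us that $T$ maps ${\cal M}^d_b(X)$ into ${\cal U}^d_b(X)\subset{\cal M}^d_b(X)$ under (W), and into ${\cal U}^d_b(X)\subset{\cal M}^d_b(X)$ under (S)--type conditions. The measurable maximiser will come from Lemma~\ref{lem1}(a) applied to $a\mapsto Sv(x,a)$, which by Lemma~\ref{lem2} is upper semicontinuous in $a\in A(x)$.

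The core computation is the contraction estimate. For $v_1,v_2\in{\cal M}^d_b(X)$ and $(x,a)\in D$, $(B2.1)$ yields $|\delta(v_1(y))-\delta(v_2(y))|\le\gamma(\omega(y)\|v_1-v_2\|_\omega)$ for every $y$; then $(B2.3)(ii)$ and $(B2.4)$ integrate this up to
\[
\|Tv_1-Tv_2\|_\omega\le\psi(\|v_1-v_2\|_\omega),\qquad \psi(z):=\alpha\gamma(z).
\]
By $(B2.4)$ we have $\psi(z)<z$ for all $z>0$, while $\psi$ is continuous and increasing by Remark~\ref{rem2} and subadditive by $(B2.3)(i)$. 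Lemma~\ref{lem4} produces a unique $v^*\in{\cal M}^d_b(X)$ with $Tv^*=v^*$ and $\|T^{(m)}\mathbf{0}-v^*\|_\omega\to 0$. Under (W), ${\cal U}^d_b(X)$ is a closed $T$-invariant subset (Lemma~\ref{lem2}(a)), so $v^*\in{\cal U}^d_b(X)$, giving (c). Lemma~\ref{lem1}(a) then delivers $f^*\in F$ with $T_{f^*}v^*=Tv^*=v^*$.

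To identify $v^*$ with the value function, iterating the previous inequality along the history-dependent operators $T_{\pi_k}$ gives, for $v,w\in{\cal M}^d_b(X)$ viewed as state-only functions,
\[
|T_{\pi_1}\cdots T_{\pi_n}v(x)-T_{\pi_1}\cdots T_{\pi_n}w(x)|\le\omega(x)\psi^{(n)}\bigl(\|v-w\|_\omega\bigr).
\]
An analogous backward induction using assumption (A) (with $K:=\max(b,c)$) together with Lemma~\ref{ap1} applied to the subadditive $\psi$ yields the uniform tail bound $|T_{\pi_{n+1}}\cdots T_{\pi_{n+k}}\mathbf{0}(h_{n+1})|\le L(K)\omega(x_{n+1})$. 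Combining these two estimates makes $\bigl(U_n(x,\pi)\bigr)_n$ Cauchy, so the limit in (\ref{recduinf}) is well-defined. Since $v^*$ depends only on the state and $Tv^*=v^*$, monotonicity of $\delta$ gives $T_{\pi_k}v^*\le v^*$ pointwise on histories; iterating yields $T_{\pi_1}\cdots T_{\pi_n}v^*(x)\le v^*(x)$. Using the displayed contraction estimate with $v=\mathbf{0}$, $w=v^*$ and letting $n\to\infty$ gives $U(x,\pi)\le v^*(x)$ for every $\pi$. Applying Lemma~\ref{lem4} to $T_{f^*}$ (whose contraction profile is again $\psi$) gives $T_{f^*}^{(n)}\mathbf{0}\to v^*$ in weighted norm, hence $U(x,f^*)=v^*(x)$, proving (a) and (b). Under (S), the same argument survives except for part (c), since without continuity of $\omega$ the iterates $T^{(m)}\mathbf{0}$ need not stay in ${\cal U}^d_b(X)$.

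The main obstacle I expect is this identification step: keeping track of the history-dependent operators $T_{\pi_k}$, verifying that finite compositions applied to $v^*$ still satisfy $T_{\pi_1}\cdots T_{\pi_n}v^*(x)\le v^*(x)$ at the initial state, and invoking Lemma~\ref{ap1} to obtain tail bounds uniform in $k$ so that the infinite-horizon limit in (\ref{recduinf}) genuinely exists before one compares it with the fixed point.
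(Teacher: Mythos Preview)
Your proposal is correct and follows essentially the same route as the paper: establish the contraction estimate $\|Tv_1-Tv_2\|_\omega\le\tilde\gamma(\|v_1-v_2\|_\omega)$, apply Lemma~\ref{lem4} to obtain the unique fixed point, extract $f^*$ via Lemma~\ref{lem1}, and identify $v^*$ with the value function by combining the iterated inequality $T_{\pi_1}\cdots T_{\pi_n}v^*\le v^*$ with the bound $|T_{\pi_1}\cdots T_{\pi_n}{\bf 0}-T_{\pi_1}\cdots T_{\pi_n}v^*|\le\omega\cdot\psi^{(n)}(\|v^*\|_\omega)$ and the Cauchy property of $(U_n(\cdot,\pi))$ coming from Lemma~\ref{ap1}. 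The only cosmetic differences are that the paper packages the well-definedness of $U(x,\pi)$ as a separate Lemma~\ref{lem6}, works directly on ${\cal U}^d_b(X)$ under $(W)$ rather than on ${\cal M}^d_b(X)$ followed by a closedness argument, and uses $\tilde\gamma$ (equal to $\gamma$ when $\alpha\le1$) instead of your uniform $\psi=\alpha\gamma$; none of this changes the substance.
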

\begin{remark}
\label{review}
As already mentioned, in our approach we consider only deterministic strategies.
This is because the optimality results do not change, when
we take randomised strategies into account. Actually, 
we may examine a new model
in which the original action sets $A(x)$ are replaced by the set of 
probability measures $\Pr(A(x))$. 
Then, the Bellman  equation has a solution as in Theorem \ref{thm1}, 
but the supremum in (\ref{T})
is taken over the set $\Pr(A(x))$. However, 
due to our assumptions the maximum is also attained 
at a Dirac delta concentrated at some point from $A(x).$
Therefore, randomised strategies do not influence the results.
 \end{remark}
Since
condition $(B2.4)$ contains two cases, it is convenient to define a new function
$$\tilde{\gamma}(y):=\left\{ \begin{array}{c c}
\gamma(y), & \alpha\le 1\\
\alpha\gamma (y), & \alpha>1.
\end{array}\right.
$$
Clearly, $\tilde{\gamma}$ is subadditive.
Let $z=\max\{b,c\},$ the constants $b>0$ and $c>0$ come from $(A).$
Then $|u(x,a)|\le\omega(x) z $ for all $(x,a)\in D.$ 
 From $(B2.3)(ii),$ it follows
that
\begin{equation}
\label{omegaout}
\tilde{\gamma}(\omega(x)y)\le \omega(x)\tilde{\gamma}(y),\quad\mbox{for all }\quad x\in X,\ y\ge 0.
\end{equation} 
This inequality is frequently used in our proofs. 
Let 
$$
 \tilde{\gamma}_k(z)=z+\tilde{\gamma}\big(z+\tilde{\gamma}(z+\cdots+\tilde{\gamma}(z)\big)\cdots \big),
$$
where $z$  appears on the right-hand side $k$ times. 
Putting $\psi=\tilde{\gamma}$
in Lemma \ref{ap1}, we infer that
\begin{equation}
\label{tildeL}
 \tilde{L}(z):= \lim_{k\to\infty}\tilde{\gamma}_k(z)= \sup_{k\in \mathbb{N}} \tilde{\gamma}_k(z) <\infty.
\end{equation}
We point out that $\tilde{\gamma}^{(n)}$ is the $n$-th iteration of the function  $\tilde{\gamma}.$\\

We now prove that the recursive discounted utility (\ref{recduinf}) is well-defined.

\begin{lemma} \label{lem6} If $u \in {\cal M}^d_b(D)$ and  assumptions $(B)$ 
are satisfied, then $U(x,\pi):=\lim_{n\to\infty}  U_n(x,\pi)$ exists for any policy $\pi\in\Pi$ and any initial state $x\in X.$
Moreover, $U(\cdot,\pi)\in {\cal M}^d_b(X)$ and 
$$\lim_{n\to\infty} \| U(\cdot,\pi)- U_n(\cdot,\pi)\|_\omega=0.$$ 
\end{lemma}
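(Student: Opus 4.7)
Since $u\in{\cal M}^d_b(D),$ fix $z>0$ with $|u(x,a)|\le z\omega(x)$ for every $(x,a)\in D.$ The plan is to show that $(U_n(\cdot,\pi))_{n\ge 1}$ is a Cauchy sequence in the weighted Banach space $({\cal M}^d_b(X),\|\cdot\|_\omega);$ its limit will then be the desired $U(\cdot,\pi)\in{\cal M}^d_b(X),$ with convergence in weighted norm. Two ingredients do the work: the pointwise Lipschitz estimate
$$|T_{\pi_k}v(h_k)-T_{\pi_k}w(h_k)|\le \int_X \gamma\bigl(|v-w|(h_k,\pi_k(h_k),x_{k+1})\bigr)q(dx_{k+1}|x_k,\pi_k(h_k))$$
coming from $(B2.1),$ and a uniform a priori bound on the tail operators $T_{\pi_j}\cdots T_{\pi_{j+k-1}}{\bf 0}$ furnished by Lemma \ref{ap1}.

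First I would prove by induction on $k\ge 1$ the uniform tail bound
$$
|T_{\pi_j}T_{\pi_{j+1}}\cdots T_{\pi_{j+k-1}}{\bf 0}(h_j)|\le \omega(x_j)\,\tilde{\gamma}_k(z)\quad\mbox{for every }h_j\in H_j.
$$
The case $k=1$ is just $|u(x_j,\pi_j(h_j))|\le z\omega(x_j).$ Assuming the bound at level $k$ with starting index $j+1,$ apply $T_{\pi_j}$ and use $|\delta(y)|\le\gamma(|y|)$ (Remark \ref{rem2}) together with $(B2.3)(ii)$ and $(B2.4)$ to obtain
$$
\int_X \gamma\bigl(\omega(x_{j+1})\tilde{\gamma}_k(z)\bigr) q(dx_{j+1}|x_j,\pi_j(h_j))\le \alpha\omega(x_j)\gamma(\tilde{\gamma}_k(z))\le \omega(x_j)\tilde{\gamma}(\tilde{\gamma}_k(z)),
$$
which, added to $|u(x_j,\pi_j(h_j))|\le z\omega(x_j),$ yields the level-$(k{+}1)$ bound $\omega(x_j)\tilde{\gamma}_{k+1}(z).$ By Lemma \ref{ap1} applied with $\psi=\tilde{\gamma},$ one has $\tilde{\gamma}_k(z)\le\tilde{L}(z)<\infty$ for all $k;$ in particular $|U_n(x,\pi)|\le\omega(x)\tilde{L}(z)$ uniformly in $n$ and $\pi.$

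To control $U_{n+m}-U_n,$ set $V_m:=T_{\pi_{n+1}}\cdots T_{\pi_{n+m}}{\bf 0},$ so that $U_{n+m}(x,\pi)=T_{\pi_1}\cdots T_{\pi_n}V_m(x)$ while $U_n(x,\pi)=T_{\pi_1}\cdots T_{\pi_n}{\bf 0}(x).$ The tail bound gives $|V_m(h_{n+1})-{\bf 0}(h_{n+1})|\le\omega(x_{n+1})\tilde{L}(z)$ uniformly in $m.$ Propagating this difference through the $n$ outer operators, using the Lipschitz inequality and then $(B2.3)(ii),(B2.4)$ at each step exactly as above, I obtain
$$
|U_{n+m}(x,\pi)-U_n(x,\pi)|\le \omega(x)\,\tilde{\gamma}^{(n)}\bigl(\tilde{L}(z)\bigr)\quad\mbox{for all }m\ge 1.
$$
Since $\tilde{\gamma}$ is increasing, continuous (Remark \ref{rem2}) and satisfies $\tilde{\gamma}(y)<y$ for $y>0,$ the decreasing sequence $\tilde{\gamma}^{(n)}(\tilde{L}(z))$ converges to a fixed point of $\tilde{\gamma}$ in $[0,\infty),$ hence to $0.$ This yields the Cauchy property in $\|\cdot\|_\omega$ and thereby the existence of the limit $U(\cdot,\pi)\in{\cal M}^d_b(X)$ in weighted norm.

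The step I expect to be most delicate is securing the $m$-uniform bound in the Cauchy estimate: a telescoping of successive differences $|U_{k+1}-U_k|\le\omega(x)\tilde{\gamma}^{(k)}(z)$ would leave one facing the generally non-summable series $\sum_{k\ge n}\tilde{\gamma}^{(k)}(z),$ and the essential trick is to collapse the entire tail $V_m$ into the single uniform bound $\tilde{L}(z)$ via Lemma \ref{ap1} \emph{before} propagating it through $T_{\pi_1},\ldots,T_{\pi_n}.$
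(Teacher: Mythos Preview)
Your argument is correct and essentially identical to the paper's: both establish the tail bound $|T_{\pi_{n+1}}\cdots T_{\pi_{n+m}}{\bf 0}(h_{n+1})|\le\omega(x_{n+1})\tilde{\gamma}_m(z)\le\omega(x_{n+1})\tilde{L}(z)$ via Lemma~\ref{ap1}, and then propagate this through the outer $n$ operators (the paper writes this as $Q^\gamma_{\pi_1}\cdots Q^\gamma_{\pi_n}$) to reach $\|U_{n+m}(\cdot,\pi)-U_n(\cdot,\pi)\|_\omega\le\tilde{\gamma}^{(n)}(\tilde{L}(z))$. Your final remark correctly isolates the key idea, and your explicit justification that $\tilde{\gamma}^{(n)}(\tilde{L}(z))\to 0$ (via continuity and the fixed-point argument) is a detail the paper leaves implicit.
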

\begin{proof} 
   We shall prove that 
$(U_n(\cdot,\pi))$ is a Cauchy sequence of functions in ${\cal M}^d_b(X)$ for each  policy $\pi\in\Pi.$
We claim that  
\begin{equation}\label{w2}
|U_{n+m}(x,\pi)-U_n(x,\pi)|\le
Q_{\pi_1}^\gamma \ldots Q_{\pi_{n-1}}^\gamma Q^\gamma_{\pi_n} |T_{\pi_{n+1}}\ldots T_{\pi_{n+m}}{\bf 0}|(x).
\end{equation}
Indeed, using assumptions $(B)$, we can conclude that  
\begin{eqnarray}\label{w1}
\nonumber
|U_{n+m}(x,\pi)-U_n(x,\pi)|&=&|T_{\pi_1}\cdots   T_{\pi_{n}} T_{\pi_{n+1}}\cdots  T_{\pi_{n+m}} {\bf 0}(x)-T_{\pi_1}\cdots T_{\pi_n} {\bf 0}(x)|\\
\nonumber
&=&|Q_{\pi_1}^\delta T_{\pi_2}\cdots T_{\pi_{n+m}} {\bf 0}(x) - Q_{\pi_1}^\delta T_{\pi_2}\cdots T_{\pi_{n}} {\bf 0}(x) |\\ \nonumber
&\le&
Q_{\pi_1}^\gamma |T_{\pi_2}\cdots T_{\pi_{n+m}} {\bf 0}-T_{\pi_2}\cdots T_{\pi_{n}} {\bf 0}|(x) \le \quad \mbox{(cont.)}\cdots \\  \nonumber
&\le&
Q_{\pi_1}^\gamma \cdots Q_{\pi_{n-1}}^\gamma |Q^\delta_{\pi_n} T_{\pi_{n+1}}\cdots T_{\pi_{n+m}}{\bf 0}|(x) \\ \nonumber
&\le&
Q_{\pi_1}^\gamma \cdots Q_{\pi_{n-1}}^\gamma Q^\gamma_{\pi_n} |T_{\pi_{n+1}}\cdots T_{\pi_{n+m}}{\bf 0}|(x).
\end{eqnarray}

Assume that $m=1$. Then, for any $h_{n+1}\in H_{n+1},$  we have 
$$
 |T_{\pi_{n+1}}{\bf 0}(h_{n+1})|= |u(x_{n+1},\pi_{n+1}(h_{n+1}))|\le \omega (x_{n+1})z.$$
Take $m=2$ and notice by $(B)$  that
\begin{eqnarray*}
 |T_{\pi_{n+1}}T_{\pi_{n+2}}{\bf 0}(h_{n+1})|&=& |u(x_{n+1},\pi_{n+1}(h_{n+1}))+\\ &&\int_X 
 \delta(u(x_{n+2},\pi_{n+2}(h_{n+1},x_{n+2})))q(dx_{n+2}|x_{n+1},\pi_{n+1}(h_{n+1}))| \\   &\le& 
 \omega (x_{n+1})z+\int_X \gamma(\omega(x_{n+2})z)q(dx_{n+2}|x_{n+1},\pi_{n+1}(h_{n+1}))\\&\le&
 \omega (x_{n+1})z+\int_X\omega(x_{n+2}) \gamma(z)q(dx_{n+2}|x_{n+1},\pi_{n+1}(h_{n+1}))\\ &\le&
 \omega (x_{n+1})(z+\alpha\gamma(z))\le \omega(x_{n+1})(z+\tilde{\gamma}(z)).
 \end{eqnarray*}
 For $m=3,$ it follows that
 \begin{eqnarray*}
 \lefteqn {|T_{\pi_{n+1}}T_{\pi_{n+2}}T_{\pi_{n+3}}{\bf 0}(h_{n+1})|= |u(x_{n+1},\pi_{n+1}(h_{n+1}))+}\\ &&\int_X 
 \delta(T_{\pi_{n+2}}T_{\pi_{n+3}} {\bf 0}(h_{n+1},\pi_{n+1}(h_{n+1}),x_{n+2} ))q(dx_{n+2}|x_{n+1},\pi_{n+1}(h_{n+1}))| \\   &\le& 
 \omega (x_{n+1})z+
 \int_X \gamma\left(\omega(x_{n+2})(z+\tilde{\gamma}(z))\right)
 q(dx_{n+2}|_{n+1},\pi_{n+1}(h_{n+1}))
 \\  &\le&
 \omega (x_{n+1})z+\int_X\omega(x_{n+2})  \gamma\left(z+\tilde{\gamma}(z)\right)q(dx_{n+2}|x_{n+1},\pi_{n+1}(h_{n+1}))\\ &\le&
 \omega (x_{n+1})z+\alpha\omega (x_{n+1})\gamma(z+\tilde{\gamma}(z))\le \omega (x_{n+1})(z+\tilde{\gamma}(z+\tilde{\gamma}(z))).
  \end{eqnarray*}
Continuing this way, for any $h_{n+1}\in H_{n+1},$ we obtain
\begin{eqnarray} \label{w3}
 |T_{\pi_{n+1}}\cdots T_{\pi_{n+m}}{\bf 0}(h_{n+1}) |&\le& \omega(x_{n+1})
\big(z+\tilde{\gamma}\big(z+\tilde{\gamma}(z+\cdots+\tilde{\gamma}(z+\tilde{\gamma}(z))\cdots) \big)\big)
 \nonumber \\&=&\omega(x_{n+1})\tilde{\gamma}_m\left( z\right),
\end{eqnarray}
where $z$ appears $m$ times on the right-hand side of inequality (\ref{w3}). 
By (\ref{tildeL}), $\tilde{\gamma}_m( z)< \tilde{L} ( z )<\infty.$
Combining (\ref{w2}) and (\ref{w3}) and making use of $(B2.4)$ and  (\ref{omegaout}), we conclude that
\begin{eqnarray*}
Q_{\pi_1}^\gamma \ldots Q_{\pi_{n-1}}^\gamma Q^\gamma_{\pi_n} \tilde{L}(z)\omega  (x)&\le& 
 Q_{\pi_1}^\gamma \ldots Q_{\pi_{n-1}}^\gamma
\gamma(\tilde{ L}(z)) \alpha \omega (x) =Q_{\pi_1}^\gamma \cdots Q_{\pi_{n-1}}^\gamma \tilde{\gamma}(\tilde{L}(z))\omega (x) ...\\
&\le&\tilde{\gamma}^{(n)}(\tilde{L}(z))\omega (x).
 \end{eqnarray*}
Consequently,
\begin{equation}
\label{cauchyseq}
\|U_{n+m}(\cdot,\pi)-U_n(\cdot,\pi)\|_\omega \le\tilde{\gamma}^{(n)} (\tilde{L}(z)).
\end{equation}
From  the proof of (\ref{w3}) we deduce that for any $n\in\mathbb{N}$ 
$$|U_n(x,\pi)|\le \omega(x)\tilde{\gamma}_n(z)\le \omega(x) \tilde{L}(z).$$
Therefore, for each $n\in\mathbb{N},$  $U_n(\cdot,\pi)\in
{\cal M}_b^d(X).$ From (\ref{cauchyseq}) it follows that  
$(U_n(x,\pi))$ is a Cauchy sequence in the Banach space ${\cal M}^d_b(X).$ 
\hfill 
$\Box$
\end{proof}

\begin{proof}{\it of Theorem \ref{thm1}}  Consider  first assumptions $(W)$.
By Lemma \ref{lem2}, $T$ maps ${\cal U}^d_b(X)$ into itself. 
 We show that $T$ has a fixed point in ${\cal U}^d_b(X).$
Let $v_1,\ v_2 \in {\cal U}^d_b(X).$ Then, under assumptions $(B)$ we obtain
\begin{eqnarray*}
&&|Tv_1(x)-Tv_2(x) |\le \sup_{a\in A(x)} \int_X\left| \delta\big(v_1(y)\big)-\delta\big(v_2(y)\big)\right|q(dy|x,a)  \\ 
\le
&&\sup_{a\in A(x)}  \int_X  \gamma\big(|v_1(y) - v_2(y)|\big)q(dy|x,a)   \\ 
\le
&&\sup_{a\in A(x)}\left| \int_X \gamma\big(\|v_1 - v_2\|\big)\omega(y)q(dy|x,a)\right| \\ \le
&& \alpha \gamma\big( \|v_1-v_2\|_\omega\big)\omega(x).
\end{eqnarray*}
Hence,
$$
\|Tv_1-Tv_2\|_\omega \le \tilde{\gamma}( \|v_1-v_2\|_\omega).
$$
Since the space ${\cal U}^d_b(X)$ endowed with the metric induced by the norm
$\|\cdot\|_\omega$ is   complete, by Lemma \ref{lem4}, 
there exists a unique 
$v^*\in {\cal U}^d_b(X)$ such that $v^*=Tv^*$  and 
$$\lim_{n\to\infty} \|T^{(n)}v-v^*\|_\omega=0\quad\mbox{ for any }\quad v\in {\cal U}^d_b(X).$$ 

By Lemma \ref{lem1} and the assumptions that  $\delta$ is increasing and continuous,  it follows that 
there exists $f^*\in F$  such that
$v^*=T_{f^*}v^*.$ We claim that
$$v^*(x)=U(x,f^*)=\lim_{n\to\infty}T^{(n)}_{f^*}v^*(x) \quad \mbox{ for all } x\in X.$$
The operator $T_{f^*}:{\cal M}^d_b(X)\to {\cal M}^d_b(X)$ also satisfies 
assumptions of Lemma \ref{lem4}. Thus  there is a unique function $\tilde{v}\in
{\cal M}^d_b(X)$   such that 
 $$\tilde{v}(x)=T_{f^*}\tilde{v}(x)=\lim_{n\to\infty} T_{f^*}^{(n)} h(x),\quad x\in X,$$
for any $h\in {\cal M}^d_b(X).$ Therefore, $\tilde{v}=v^*.$ 
Putting $h: ={\bf 0}$ we deduce  from Lemma \ref{lem6}  that
$$\lim_{n\to\infty} T_{f^*}^{(n)} {\bf 0}(x)=U(x,f^*),\quad x\in X.$$
In order to prove the optimality of $f^*$ note that  for any $a\in A(x)$ and $x\in X$,  it holds
$$v^*(x)\ge u(x,a)+\int_X \delta(v^*(y))q(dy|x,a).$$
Taking any policy $\pi=(\pi_n)$ and iterating the above inequality, we get
$$v^*(x)\ge T_{\pi_1}\cdots T_{\pi_{n}}v^*(x),\quad x\in X.$$
We now prove that 
$$\lim_{n\to\infty} T_{\pi_1}\cdots T_{\pi_{n}} v^*(x)=U(x,\pi), \quad x\in X.$$
With this end in view, we first consider the differences
\begin{eqnarray*}
|U_{n}(x,\pi)- T_{\pi_1}\cdots T_{\pi_{n}} v^*(x)|&=& | T_{\pi_1}\cdots T_{\pi_{n}}{\bf 0}(x) -
 T_{\pi_1}\cdots T_{\pi_{n}}v^*(x)|\\
 &\le&\omega(x) \tilde{\gamma}^{(n)}(\|v^*\|_\omega)
\to 0 \quad \mbox{ as } n\to\infty.
 \end{eqnarray*}
By Lemma \ref{lem6}, $U_{n}(x,\pi)\to U(x,\pi)$ for every $x\in X$ as
 $n \to\infty.$ Therefore, we  have that
 $$v^*(x)\ge \lim_{n\to\infty}  T_{\pi_1}\cdots T_{\pi_{n}} v^*(x)= U(x,\pi),\quad x\in X$$
and 
$$ \sup_{\pi\in\Pi} U(x,\pi)\ge U(x,f^*)=v^*(x)\ge \sup_{\pi\in\Pi} U(x,\pi),\quad  x\in X.$$
This implies that
$$  U(x,f^*)=v^*(x)= \sup_{\pi\in\Pi} U(x,\pi),\quad  x\in X,$$
which  finishes the proof under assumptions $(W)$. 
For assumptions $ (S)$ the proof proceeds along the same lines. By Lemma
\ref{lem2}, under $(S)$, $T: {\cal M}_b^d(X)\to {\cal M}_b^d(X)$. \hfill $\Box$
\end{proof}

\begin{remark}
\label{valiter}
Under assumptions of Theorem \ref{thm1}, the Bellman equation
has a unique solution and it is the optimal value function $v^*(x)=
\sup_{\pi\in\Pi} U(x,\pi).$
Moreover, it holds that
$$\lim_{n\to\infty}\|T^{(n)}{\bf 0} -v^*\|_\omega=0.$$
Obviously, $T^{(n)}{\bf 0}$ is the value function in the $n$-step dynamic programming problem.  
One can say that the {\it value iteration algorithm} works
and the iterations
$T^{(n)}{\bf 0}(x)$  converge  to $v^*(x)$
  for each  $x\in X.$ This convergence is uniform in $x\in X$ when the weight function  $	\omega$ is bounded. 
\end{remark}


\subsection{\bf  One-period utilities unbounded from below}

In this subsection we drop condition $(A2.1)$ and assume that 
there exists $c>0 $ such that $u(x,a)\le c\omega(x)$ for all $(x,a)\in D.$ 
In other words, $u\in {\cal M}^a_b(D).$ Here we obtain the following result which is shown in the remaining part of this subsection.\\ 

\begin{theorem}
\label{thm2}
Suppose that assumptions  $(A2.2),$ $(B)$and $(W)$  are satisfied. Then\\
$(a)$ the optimal value function
$$\underline{v}^*(x):=\sup_{\pi\in\Pi}U(x,\pi), \ x\in X,$$
 is a solution to the Bellman equation $Tv=v$ and $\underline{v}^* \in {\cal M}^a_b(X),$\\
$(b)$ there exists $ \tilde{f}\in F$  such that $T_{\tilde{f}}\underline{v}^*=\underline{v}^*$
and $\tilde{f}$ is an optimal stationary policy,\\
$(c)$ $\underline{v}^* \in {\cal U}^a_b(X).$\\
The points (a) and (b) also remain valid  under assumptions $(A2.2)$, $(B)$ and  $(S).$
\end{theorem}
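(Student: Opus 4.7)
The plan is to approximate by a truncation from below. For each $k\in\mathbb{N}$, set $u_k(x,a):=\max\{u(x,a),-k\omega(x)\}$, so $|u_k|\le \max\{k,c\}\omega$; the $k$-truncated model (with $u$ replaced by $u_k$) satisfies $(A2.1)$ (with $b=k$), $(A2.2)$, $(B)$, together with $(W)$ (resp.\ $(S)$). Theorem \ref{thm1} applied to this model supplies a unique $v_k^*\in{\cal U}^d_b(X)$ (resp.\ ${\cal M}^d_b(X)$) satisfying
$$v_k^*(x)=\sup_{a\in A(x)}\Big[u_k(x,a)+\int_X\delta(v_k^*(y))\,q(dy|x,a)\Big],$$
together with an optimal stationary policy $f_k^*\in F$ for which $v_k^*(x)=\sup_\pi U^k(x,\pi)$, where $U^k$ denotes the recursive discounted utility built from $u_k$. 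Since $u_k$ is pointwise non-increasing in $k$ and $\delta$ is increasing, $(v_k^*)$ is pointwise non-increasing; set $\bar v(x):=\lim_k v_k^*(x)$. Because $v_k^*\le v_1^*$ and $v_1^*\in{\cal M}^d_b(X)$, we have $\bar v\le c_1\omega$ for some $c_1>0$, so $\bar v\in{\cal M}^a_b(X)$; under $(W)$ each $v_k^*$ is upper semicontinuous and a decreasing limit of such functions is upper semicontinuous, giving $\bar v\in{\cal U}^a_b(X)$, which yields (c). Moreover $u_k\ge u$ gives $U^k(\cdot,\pi)\ge U(\cdot,\pi)$ for every $\pi$, whence $v_k^*\ge\underline{v}^*$ and $\bar v\ge\underline{v}^*$.

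The next step is to verify the Bellman equation $T\bar v=\bar v$. For $\bar v\ge T\bar v$, fix $a\in A(x)$: the inequality $v_k^*(x)\ge u_k(x,a)+\int\delta(v_k^*)\,q(dy|x,a)$ has right-hand side decreasing in $k$, dominated above by the integrable function $\omega(\cdot)\gamma(c_1)$ (from $(B2.1)$, $(B2.3)(ii)$, and $(B2.4)$), so monotone convergence yields $\bar v(x)\ge u(x,a)+\int\delta(\bar v)\,q(dy|x,a)$; taking sup over $a$ gives $\bar v\ge T\bar v$. For the reverse, Lemma \ref{lem1} supplies $a_k\in A(x)$ attaining the max in the $k$-th Bellman equation, and compactness of $A(x)$ yields a subsequence $a_{k_j}\to a_\infty\in A(x)$. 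For any $K$ and $k_j\ge K$, $u_{k_j}\le u_K$ and $v_{k_j}^*\le v_K^*$, so
$$v_{k_j}^*(x)\le u_K(x,a_{k_j})+\int_X\delta(v_K^*(y))\,q(dy|x,a_{k_j}).$$
Under $(W)$ (resp.\ $(S)$), Lemma \ref{lem2} makes the right-hand side upper semicontinuous in $a$; passing $j\to\infty$ gives $\bar v(x)\le u_K(x,a_\infty)+\int\delta(v_K^*)\,q(dy|x,a_\infty)$, and letting $K\to\infty$ (monotone convergence) produces $\bar v(x)\le u(x,a_\infty)+\int\delta(\bar v)\,q(dy|x,a_\infty)\le T\bar v(x)$. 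Hence $T\bar v=\bar v$, and Lemma \ref{lem1} yields $\tilde f\in F$ with $T_{\tilde f}\bar v=\bar v$.

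The main obstacle is to identify $\bar v$ with $\underline{v}^*$ and establish optimality of $\tilde f$. Iterating gives $\bar v=T_{\tilde f}^{(n)}\bar v$ for every $n$. Since $\bar v\le v_k^*$ and the pointwise inequality $T_{\tilde f}w\le T_{\tilde f}^{[k]}w$ holds (where $T_{\tilde f}^{[k]}$ denotes the stationary operator built from $u_k$), an induction in $n$ combined with monotonicity of $T_{\tilde f}$ yields $\bar v\le (T_{\tilde f}^{[k]})^{(n)} v_k^*$ for all $n,k$. By Lemma \ref{lem6} applied to the $k$-truncated model (where $v_k^*\in {\cal M}^d_b(X)$), the right-hand side converges as $n\to\infty$ to $U^k(x,\tilde f)$, so $\bar v(x)\le U^k(x,\tilde f)$; since also $U^k(x,\tilde f)\le v_k^*(x)$, one obtains $U^k(x,\tilde f)\downarrow \bar v(x)$ as $k\to\infty$. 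The delicate remaining step, which I expect to be the most intricate, is to identify this decreasing limit with $U(x,\tilde f)$: by induction in $n$ and monotone convergence inside each nested integral one obtains $U_n^k(x,\tilde f)\downarrow U_n(x,\tilde f)$ for every fixed $n$, and one must then interchange the limits in $n$ and $k$ by exploiting the uniform upper bound $U_n^k\le c_1\omega$ together with the Cauchy-type estimate in $n$ from the proof of Lemma \ref{lem6} applied to the $k$-truncated problem (whose constants are controlled via Lemma \ref{ap1}). Granted this identification, $\bar v\le U(\cdot,\tilde f)\le \underline{v}^*\le \bar v$, so $\bar v=\underline{v}^*=U(\cdot,\tilde f)$, which proves (a) and (b). The argument under $(S)$ runs identically with Lemma \ref{lem2}(b) replacing Lemma \ref{lem2}(a).
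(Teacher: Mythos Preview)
Your overall strategy --- truncate $u$ from below, apply Theorem~\ref{thm1} to each truncated model, pass to the monotone limit $\bar v$, verify $T\bar v=\bar v$, select $\tilde f$, and then show $\bar v\le U(\cdot,\tilde f)$ --- is exactly the paper's strategy, and everything up to and including the inequality $\bar v\le (T_{\tilde f}^{[k]})^{(n)}v_k^*$ is correct. (The paper truncates by the constant $1-K$ rather than $-k\omega$ and cites Lemma~\ref{sch} instead of extracting convergent maximisers by hand, but these differences are cosmetic.)

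The gap is in your final identification $\lim_k U^k(x,\tilde f)=U(x,\tilde f)$. You propose to interchange the limits in $n$ and $k$ via the Cauchy estimate of Lemma~\ref{lem6} applied to the $k$-truncated model. That estimate reads
\[
\|U^k_{n+m}(\cdot,\tilde f)-U^k_n(\cdot,\tilde f)\|_\omega\le \tilde\gamma^{(n)}\bigl(\tilde L(z_k)\bigr),\qquad z_k=\max\{k,c\},
\]
and Lemma~\ref{ap1} only guarantees $\tilde L(z_k)<\infty$ for each fixed $k$; since $z_k\to\infty$, the bound is \emph{not} uniform in $k$ and the interchange is not justified. This is precisely the point where the unbounded-from-below case bites: the two-sided estimate of Lemma~\ref{lem6} sees the lower truncation level.

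The paper circumvents this by using instead the \emph{one-sided} estimate established in Step~1 of the proof of Lemma~\ref{l3} (inequality~(\ref{nnnn})), which depends only on the upper bound $c$. Concretely, since $v_k^*\le \tilde c\,\omega$ with $\tilde c=\tilde L(c)$ independent of $k$, one has
\[
\bar v\;\le\;(T_{\tilde f}^{[k]})^{(n)}\tilde c\,\omega\;\le\;U^k_n(\cdot,\tilde f)+\tilde\gamma^{(n)}(\tilde c)\,\omega.
\]
Now the error term $\tilde\gamma^{(n)}(\tilde c)$ is uniform in $k$, so one may let $k\to\infty$ first (using Lemma~\ref{aux}) to obtain $\bar v\le U_n(\cdot,\tilde f)+\tilde\gamma^{(n)}(\tilde c)\,\omega$, and then let $n\to\infty$ to get $\bar v\le U(\cdot,\tilde f)$. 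Replacing your interchange argument by this one-sided bound closes the gap.
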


\begin{remark}
\label{bda}
We shall prove that $\underline{v}^*$ is the limit of a non-increasing sequence 
of value functions
in  ``truncated models'', i.e. the models that satisfy $(A2.1)$ and $(A2.2)$. 
The convergence is monotone, 
but it is not uniform. The Bellman equation may have many  unbounded solutions.
\end{remark}

\begin{remark}
\label{newrem} 
The assumptions of Theorem \ref{thm2} do not guarantee uniqueness. 
An example is very simple. 
Assume that $X=\mathbb{N},$ $A=A(x)=\{a\},$ $u(x,a)=0$ for all $(x,a)\in D,$ 
and the process moves from state $x$ to $x+1$
with probability one. The discount function $\delta(x)=\beta x$ with $\beta \in (0,1).$ 
Clearly, $u$ satisfies assumption $(A2.2)$ with $\omega(x)=1,$ $c=1.$ 
Note that $v (x)= r/\beta^x$ is a solution to the Bellman equation $Tv=v$ for any $r\in\mathbb{R}.$ 
Clearly, $\underline{v}^*(x)= 0$   is one of them.
Actually,  $\underline{v}^*(x)= 0$ is 
the largest non-positive solution to the Bellman equation. 
This example does not contradict
the uniqueness result in Theorem \ref{thm1}.   
Within the class of bounded functions $\underline{v}^*(x)= 0$ is
 the unique solution to the Bellman equation.
\end{remark} 

We now prove that the recursive discounted utility (\ref{recduinf}) is well-defined.

\begin{lemma} \label{l3} If $u \in {\cal M}^a_b(D)$ and  assumptions
 $(B)$ 
are satisfied, then $U(x,\pi):=\lim_{n\to\infty}  U_n(x,\pi)$ 
exists in $\underline{\mathbb{R}}$ for any policy $\pi\in\Pi$ and any initial state $x\in X.$ Moreover,
 $U(\cdot,\pi)\in {\cal M}^a_b(X).$ 
\end{lemma}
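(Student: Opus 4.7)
The plan is to approximate $u\in{\cal M}^a_b(D)$ from above by a decreasing sequence of truncations in ${\cal M}^d_b(D)$, apply Lemma \ref{lem6} to each truncated model, and pass to the pointwise monotone limit. Concretely, I set
\[
u_k(x,a):=\max\{u(x,a),\,-k\omega(x)\},\qquad k\in\mathbb{N},
\]
so that $-k\omega\le u_k\le c\omega$, $u_k\in{\cal M}^d_b(D)$, and $u_k(x,a)\downarrow u(x,a)$ pointwise. Denote by $U_n^{(k)}(x,\pi)$ and $U^{(k)}(x,\pi):=\lim_{n\to\infty}U_n^{(k)}(x,\pi)$ the $n$-stage and infinite-horizon recursive utilities associated with $u_k$. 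Lemma \ref{lem6} guarantees $U^{(k)}(\cdot,\pi)\in{\cal M}^d_b(X)$ and $\|U_n^{(k)}(\cdot,\pi)-U^{(k)}(\cdot,\pi)\|_\omega\to 0$.

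Two monotonicity facts then organise the rest of the argument. Since $\delta$ is increasing, every operator $T_{\pi_j}^{(k)}$ is order-preserving in both the data and the continuation, so from $u_k\ge u_{k+1}\ge u$ we obtain $U_n^{(k)}\ge U_n^{(k+1)}\ge U_n$ for all $n,k,\pi,x$ (where $U_n$ is built from the original $u$), and consequently $U^{(k)}\ge U^{(k+1)}$. The pointwise decreasing limit
\[
\overline{U}(x,\pi):=\inf_{k\in\mathbb{N}}U^{(k)}(x,\pi)
\]
thus exists in $\underline{\mathbb{R}}$. Rerunning the inductive estimate (\ref{w3}) with only the one-sided bound $u_k\le c\omega$ (using that $\delta$ is increasing and that $(B2.3)(ii)$ still pulls $\omega$ outside) gives the $k$-uniform estimate $U_n^{(k)}(x,\pi)\le\omega(x)\tilde{\gamma}_n(c)\le\omega(x)\tilde{L}(c)$, so $\overline{U}(\cdot,\pi)\in{\cal M}^a_b(X)$.

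To conclude that $\lim_{n\to\infty}U_n(x,\pi)$ exists in $\underline{\mathbb{R}}$ and equals $\overline{U}(x,\pi)$, I would first derive a stationary identity for $\overline{U}$. Passing $n\to\infty$ inside the one-step recursion $U_n^{(k)}=T_{\pi_1}^{(k)}U_{n-1}^{(k)}(\cdot,\pi')$, where $\pi'$ denotes the policy shifted by one stage along the realised history, via the $\|\cdot\|_\omega$-convergence of Lemma \ref{lem6} and dominated convergence, gives
\[
U^{(k)}(x,\pi)=u_k(x,\pi_1(x))+\int_X\delta\bigl(U^{(k)}(y,\pi')\bigr)\,q(dy|x,\pi_1(x)),
\]
and downward monotone convergence in $k$ (with $\delta(U^{(1)}(\cdot,\pi'))$ as a $q$-integrable dominator, thanks to $U^{(k)}\le\omega\tilde{L}(c)$) upgrades this to the analogous identity for $\overline{U}$. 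Iterating $n$ times and comparing with $U_n=T_{\pi_1}\cdots T_{\pi_n}{\bf 0}$ via the Lipschitz property of $\delta$ from $(B2.1)$, on the set where $\overline{U}$ along the $n$-step shifted policy is finite, one obtains
\[
\bigl|U_n(x,\pi)-\overline{U}(x,\pi)\bigr|\le\omega(x)\tilde{\gamma}^{(n)}\bigl(\tilde{L}(c)\bigr)\longrightarrow 0\quad\text{as}\quad n\to\infty,
\]
while on the complementary set $\delta(-\infty)=-\infty$ from $(B2.2)$ together with monotonicity forces $U_n(x,\pi)\to-\infty$. Borel measurability of $U(\cdot,\pi)$ follows because $U_n(\cdot,\pi)=\inf_{k}U_n^{(k)}(\cdot,\pi)$ is Borel as a pointwise decreasing limit of Borel functions, and $U(\cdot,\pi)$ is the limit of these.

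The main obstacle is the comparison in the previous paragraph. Because $\overline{U}$ may equal $-\infty$ on a set of positive $q$-probability along certain trajectories, $\|\overline{U}(\cdot,\pi')\|_\omega$ can be infinite and the weighted-norm contraction used in the proof of Lemma \ref{lem6} does not extend verbatim; the two regimes $\{\overline{U}>-\infty\}$ and $\{\overline{U}=-\infty\}$ must be analysed separately, combining the uniform upper bound $\omega\tilde{L}(c)$ and $(B2.1)$ on the first with $\delta(-\infty)=-\infty$ and the monotonicity of $\delta$ on the second.
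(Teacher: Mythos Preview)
Your truncation strategy is genuinely different from the paper's proof: the paper works directly with the unbounded $u$, first establishing the ``almost non-increasing'' estimate $U_{n+1}(x,\pi)\le U_n(x,\pi)+\tilde{\gamma}^{(n)}(c)\omega(x)$ and then squeezing $U_n$ between a non-increasing sequence $G_n$ and $G_n-\epsilon$, where $G_n$ is built from auxiliary operators $\Gamma_{\pi_k}$. Your route via $u_k=\max\{u,-k\omega\}$ and Lemma \ref{lem6} is natural and, suitably repaired, also works; but as written it contains a real gap.

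The gap is in the comparison step. You assert that on the set where the $n$-step shifted quantity $\overline{U}_{[n]}$ is finite one has
\[
\bigl|U_n(x,\pi)-\overline{U}(x,\pi)\bigr|\le\omega(x)\,\tilde{\gamma}^{(n)}\!\bigl(\tilde{L}(c)\bigr).
\]
This would require $|\overline{U}_{[n]}|\le\tilde{L}(c)\omega$ on that set, but you only have the \emph{upper} bound $\overline{U}_{[n]}\le\tilde{L}(c)\omega$; there is no $\omega$-lower bound even where $\overline{U}_{[n]}$ is finite (the truncation bounds $|U^{(k)}|\le\tilde{L}(\max\{k,c\})\omega$ blow up with $k$). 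Hence the Lipschitz estimate $|\delta(\overline{U}_{[n]})-\delta(0)|\le\gamma(|\overline{U}_{[n]}|)$ gives nothing useful, and separating into the regimes $\{\overline{U}_{[n]}>-\infty\}$ and $\{\overline{U}_{[n]}=-\infty\}$ does not cure this: on the first regime $|\overline{U}_{[n]}|$ can still be arbitrarily large.

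The repair is to observe that you do not need the two-sided bound at all. The direction $\limsup_n U_n\le\overline{U}$ is immediate from $U_n\le U_n^{(k)}$ and Lemma \ref{lem6} (letting $n\to\infty$ and then $k\to\infty$). For the reverse direction, use only the upper bound $\overline{U}_{[n]}\le\tilde{L}(c)\omega$, i.e.\ ${\bf 0}\ge\overline{U}_{[n]}-\tilde{L}(c)\omega$, together with the one-sided consequence $\delta(a-b)\ge\delta(a)-\gamma(b)$ of $(B2.1)$, to propagate
\[
T_{\pi_k}\cdots T_{\pi_n}{\bf 0}(h_k)\;\ge\;T_{\pi_k}\cdots T_{\pi_n}\overline{U}_{[n]}(h_k)-\tilde{\gamma}^{(n-k+1)}\!\bigl(\tilde{L}(c)\bigr)\omega(x_k)
\]
from $k=n$ down to $k=1$. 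This yields $U_n(x,\pi)\ge\overline{U}(x,\pi)-\omega(x)\tilde{\gamma}^{(n)}(\tilde{L}(c))$ unconditionally (the inequality is trivial at any stage where the right-hand side is $-\infty$), hence $\liminf_n U_n(x,\pi)\ge\overline{U}(x,\pi)$. With this one-sided argument your proof goes through; the paper's direct method avoids truncation entirely but is of comparable length.
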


Our assumption that $u\in {\cal M}^a_b(D)$ means that
$(A2.2)$ holds, i.e., there exists $c>0$ such that $u(x,a)\le c\omega(x)$ for all $(x,a)\in D.$ 

\begin{proof}{\it of Lemma \ref{l3}}  We divide the proof into five parts.

\noindent 
{\it Step 1}  We start with a simple observation: for any $n\in\mathbb{N},$  $x\in X$ and $\pi\in\Pi$ it holds
 $$U_{n+1}(x,\pi)\le U_n(x,\pi)+\tilde{\gamma}^{(n)}(c)\omega(x).$$
 From assumptions $(B),$ it follows that
$$\delta(a+c\omega(x))\le \delta (a)+ \gamma(c\omega(x)) 
\le \gamma(c)\omega(x)$$ 
for $a\in\underline{\mathbb{R}}$ and $x\in X.$
Note that, for any $h_k\in H_k$ and $\pi_k$, we have 
\begin{eqnarray}\label{krzywda3}
\nonumber
T_{\pi_k} c\omega(h_k)&=&u(x_k,\pi_k(h_k))+\int_X\delta (c\omega(y))q(dy|x_k,\pi_k(h_k))\\
&\le&u(x_k,\pi_k(h_k))+Q^\gamma_{\pi_k}c\omega(x_k)\le T_{\pi_k}{\bf 0}(h_k)+\tilde{\gamma}(c)\omega(x_k).
\end{eqnarray} 
Furthermore,  for any  $v\in {\cal M}(H_{k+1})$
such that $v(h_k,\pi_k(h_k),y)\le \eta\omega(y)$ for all $y\in X$ and some $\eta>0,$ 
 we obtain
\begin{eqnarray*}
T_{\pi_k} (v+c\omega)(h_k)&=&u(x_k,\pi_k(h_k))+\int_X\delta (v(h_k,\pi_k(h_k),y)+ c\omega(y))q(dy|x_k,\pi_k(h_k))\\
&\le& u(x_k,\pi_k(h_k))+Q^\delta_{\pi_k} v(h_k)+ Q^\gamma_{\pi_k} c\omega(h_k)  
\le T_{\pi_k} v(h_k)+\tilde{\gamma}(c)\omega(x_k).
\end{eqnarray*}
From this fact and (\ref{krzywda3}) we conclude that 
\begin{eqnarray}
\label{nnnn}
U_{n+1}(x,\pi)&=&T_{\pi_1}\cdots T_{\pi_n}T_{\pi_{n+1}}{\bf 0}(x)\le 
T_{\pi_1}\cdots T_{\pi_{n-1}}T_{\pi_n}c\omega (x)\\
&\le& T_{\pi_1}\cdots T_{\pi_{n-1}}\big(T_{\pi_n}{\bf 0} +
\tilde{\gamma}(c)\omega\big)(x) \nonumber \\
&\le&
 T_{\pi_1}\cdots T_{\pi_{n-2}}\big(T_{\pi_{n-1}}T_{\pi_n}{\bf 0} 
+\tilde{\gamma}^{(2)}(c)\omega\big)(x) 
 \ldots\  \mbox{(cont.)}  \nonumber \\
&\le& T_{\pi_1}\cdots T_{\pi_{n-2}}T_{\pi_{n-1}}T_{\pi_n}{\bf 0} (x)+\tilde{\gamma}^{(n)}(c)\omega(x)  \nonumber  \\
&=&U_n(x,\pi)+\tilde{\gamma}^{(n)}(c)\omega(x). \nonumber
\end{eqnarray}
This finishes the first step.\\
 
\noindent 
{\it Step 2} Let $U_{n}(x,\pi)=-\infty$ for some $n\in\mathbb{N}$, $x\in X$ and $\pi\in\Pi$. 
Then,  by Step 1, $U_{m}(x,\pi)=-\infty$ for all $m\ge n$. Therefore, $\lim_{m\to\infty}U_{m}(x,\pi)=-\infty.$\\

\noindent
{\it Step 3}
Let 
 $v \in {\cal M}(H_{k+1}) $  be such that $v(h_k,a_k,x_{k+1}) \le \eta\omega(x_{k+1})$
for every  $x_{k+1}\in X$ and for some $\eta>0.$ Define
$${\Gamma}_{\pi_k}v(h_k):=c\omega(x_k)+Q^\gamma_{\pi_k}v(h_k)\quad\mbox{and} $$
$$
\Gamma_{\pi_{n+1}}^{\pi_{n+m}}{\bf 0}(h_{n+1}):=  \Gamma_{\pi_{n+1}}\cdots
\Gamma_{\pi_{n+m}}{\bf 0}(h_{n+1}).
$$
Note that
$$
\Gamma_{\pi_{n+m}}{\bf 0}(h_{n+m}) = 
 c\omega(x_{n+m}).$$
Next, we have 
\begin{eqnarray*}
&&\Gamma_{\pi_{n+m-1}} \Gamma_{\pi_{n+m}}{\bf 0}(h_{n+m-1})\\
&=& c\omega(x_{n+m-1})+ \int_X\gamma\big(c\omega( x_{n+m})\big)
q\big(dx_{n+m}|x_{n+m-1},\pi_{n+m-1}(h_{n+m-1})\big)\\ &\le&
\omega(x_{n+m-1})(c+\tilde{\gamma}(c)),
\end{eqnarray*}
and 
\begin{eqnarray*}
&&\Gamma_{\pi_{n+m-2}}\Gamma_{\pi_{n+m-1}} \Gamma_{\pi_{n+m}}{\bf 0}(h_{n+m-2}) \\ &\le& 
c\omega(x_{n+m-2}) +
\int_X\gamma\big( \omega( x_{n+m-1})(c+\tilde{\gamma}(c))\big) 
q\big(dx_{n+m-1}|x_{n+m-2},\pi_{n+m-2}(h_{n+m-2})\big)\\ 
&\le& \omega(x_{n+m-2})(c+ \tilde{\gamma}(c+\tilde{\gamma}(c))).
\end{eqnarray*}
Continuing this way, we get
$$
\Gamma_{\pi_{n+1}}^{\pi_{n+m}}{\bf 0}(h_{n+1}) =  \Gamma_{\pi_{n+1}}\cdots
\Gamma_{\pi_{n+m}}{\bf 0}(h_{n+1}) \le
\omega(x_{n+1})
\big(c+\tilde{\gamma}\big(c+\tilde{\gamma}\big(c+\cdots+\tilde{\gamma}(c+\tilde{\gamma}(c))\cdots\big) \big)\big),$$
where $c$ appears on the right-hand side of this inequality $m$ times. Putting $\tilde{c} =\tilde{L}(z)$ with $z=c$ in
(\ref{tildeL}), we obtain
\begin{equation}
\label{GammaL}
\Gamma_{\pi_{n+1}}^{\pi_{n+m}}{\bf 0}(h_{n+1}) =  \Gamma_{\pi_{n+1}}\cdots
\Gamma_{\pi_{n+m}}{\bf 0}(h_{n+1}) \le \omega(x_{n+1})\tilde{c}=\omega(x_{n+1})\tilde{L}(c) <\infty.
\end{equation}\\

\noindent  {\it Step 4} 
For $m,\ n \in\mathbb{N},$  we set
$$W_{n,m}(x,\pi):=T_{\pi_1}\cdots T_{\pi_n}\Gamma_{\pi_{n+1}}^{\pi_{n+m}}  {\bf 0}(x)\quad\mbox{and}\quad 
W_{n,0}(x,\pi):=T_{\pi_1}\cdots T_{\pi_n}{\bf 0}(x)=U_n(x,\pi).
$$
For any $k=1,...,n-1$ and $h_{k+1}\in H_{k+1},$ $\pi\in\Pi,$ let 
$\pi(k+1)= (\pi_{k+1},\pi_{k+2},...)$ and
$$V^{\pi(k+1)}_{n-k,m}(h_{k+1}):= T_{\pi_{k+1}}\cdots T_{\pi_n}
\Gamma_{\pi_{n+1}}^{\pi_{n+m}}  {\bf 0}(h_{k+1}),$$
$$V^{\pi(k+1)}_{n-k,0}(h_{k+1}):= T_{\pi_{k+1}}\cdots T_{\pi_n}
  {\bf 0}(h_{k+1}).$$
For $k=n-1$, we have $$V^{\pi(n)}_{1,m}(h_n)=
T_{\pi_n}
\Gamma_{\pi_{n+1}}^{\pi_{n+m}}  {\bf 0}(h_n), 
$$
and
$$V^{\pi(n)}_{1,0}(h_n)=
T_{\pi_n}
 {\bf 0}(h_n)= u(x_n,\pi_n(h_n)).
$$
Hence and from (\ref{GammaL}), it follows that
\begin{equation}
\label{wwnn}
V^{\pi(n)}_{1,m}(h_n)- V^{\pi(n)}_{1,0}(h_n) \le
\int_X \gamma\big(
\omega(x_{n+1})\tilde{c}\big)q(dx_{n+1}|x_n,\pi_n(h_n)) \le
\omega(x_n)\tilde{\gamma}(\tilde{c}).
\end{equation}
Observe that for each $k= 1,...,n-2,$
\begin{eqnarray}\label{vkeq}
V^{\pi(k+1)}_{n-k,m}(h_{k+1}) - V^{\pi(k+1)}_{n-k,0}(h_{k+1})
	&=&	T_{\pi_{k+1}}V^{\pi(k+2)}_{n-k-1,m}(h_{k+1}) -
	T_{\pi_{k+1}}V^{\pi(k+2)}_{n-k-1,0}(h_{k+1}) \\ \nonumber 
	& =&
	Q^\delta_{\pi_{k+1}}V^{\pi(k+2)}_{n-k-1,m}(h_{k+1})-
	Q^\delta_{\pi_{k+1}}V^{\pi(k+2)}_{n-k-1,0}(h_{k+1})  
	\nonumber \\
	&\le& Q^\gamma_{\pi_{k+1}}\big(V^{\pi(k+2)}_{n-k-1,m} -
	 V^{\pi(k+2)}_{n-k-1,0}\big)(h_{k+1}). \nonumber
	\end{eqnarray}
	It is important to note that
	$$V^{\pi(k+1)}_{n-k ,m}-
	V^{\pi(k+1)}_{n-k ,0}> 0,\quad k=1,...,n-1.$$
Now, using (\ref{vkeq}), for any $\pi\in\Pi$ and $x=x_1,$  we 
conclude that
\begin{eqnarray*}
W_{n,m}(x,\pi)- W_{n,0}(x,\pi) &=&T_{\pi_1}V^{\pi(2)}_{n-1 ,m}(x)
-T_{\pi_1}V^{\pi(2)}_{n-1 ,0}(x)\\ &=&
Q^\delta_{\pi_1} V^{\pi(2)}_{n-1 ,m}(x) -Q^\delta_{\pi_1}V^{\pi(2)}_{n-1 ,0}(x) \\
&\le& Q^\gamma_{\pi_1}\big(
V^{\pi(2)}_{n-1 ,m}  - V^{\pi(2)}_{n-1 ,0}\big)(x) \\
&\le&  
Q^\gamma_{\pi_1}Q^\gamma_{\pi_2}
\big(
V^{\pi(3)}_{n-2 ,m}  - V^{\pi(3)}_{n-2 ,0}\big)(x)  ...\ \mbox{(cont.)}\\
&\le& Q^\gamma_{\pi_1}Q^\gamma_{\pi_2}\cdots Q^\gamma_{\pi_{n-1}}
\big(
V^{\pi(n)}_{1 ,m}  - V^{\pi(n)}_{1 ,0}\big)(x).
\end{eqnarray*}
This and (\ref{wwnn}) imply that
\begin{equation}
\label{wwwnn}
W_{n,m}(x,\pi)- W_{n,0}(x,\pi)\le \omega(x)\tilde{\gamma}^{(n)}(\tilde{c}),\ \mbox{for all}\ m,\ n \in \mathbb{N}. 
\end{equation}\\

\noindent
{\it Step 5} We now consider the case where 
  $U_{n}(x,\pi)>-\infty$ for an initial state $x\in X,$ a policy    $\pi\in\Pi$ and for all $n\in\mathbb{N}$. 
From (\ref{GammaL}) we have 
$$  
W_{n,m}(x,\pi)\le
W_{n,m+1}(x,\pi)
\le \Gamma_{\pi_1}^{\pi_{n+m+1}}{\bf 0} (x)\le \tilde{c}\omega(x) <\infty.$$
Therefore, $\lim_{m\to\infty} W_{n,m}(x,\pi)$ exists and is 
finite. Let us denote this limit by $G_n$. 
Note that, for each $m,\ n\in \mathbb{N},$
$$
U_n(x,\pi)= W_{n,0}(x,\pi) \le W_{n,m}(x,\pi).
$$
Let $\epsilon>0$ be fixed. Then, by (\ref{wwwnn}), for 
sufficiently large $n,$ say $n>N_0,$
$$ 
W_{n,m}(x,\pi)\le W_{n,0}(x,\pi)+\epsilon
$$
for all $m\in\mathbb{N}.$ Thus
$$
W_{n,m}(x,\pi)  -  \epsilon\le W_{n,0}(x,\pi)\le W_{n,m}(x,\pi)
$$
and consequently
$$
G_n  -  \epsilon\le W_{n,0}(x,\pi)\le G_n
$$
for all $n> N_0.$
Observe that the sequence $(G_n)$ is non-increasing and 
$G_*:=\lim_{n\to\infty}G_n$ exists
in the extended real line $\underline{\mathbb{R}}$.
Hence,  the limit
 $$\lim_{n\to\infty}W_{n,0}(x,\pi)=\lim_{n\to\infty} U_n(x,\pi)$$ also exists 
and equals $G_*.$\hfill
$\Box$
\end{proof}

In the proof of Theorem \ref{thm2}  we shall need the following result (see
 \cite{schal} or Theorem A.1.5 in \cite{br11}).

\begin{lemma}\label{sch} If $Y$ is a metric space and $(w_n)$ is a non-increasing 
sequence of upper semicontinuous functions 
 $w_n:Y\to \underline{\mathbb{R}},$ then\\
(a) $w_\infty=\lim_{n\to\infty}w_n$ exists and $w_\infty$ is upper 
semicontinuous,\\
(b)  if, additionally, $Y$ is compact, then 
$$\max_{y\in Y}
\lim_{n\to\infty}w_n(y)=\lim_{n\to\infty}\max_{y\in Y}w_n(y).$$
\end{lemma}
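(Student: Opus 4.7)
For part (a), the plan is to exploit monotonicity to write $w_\infty$ as a pointwise infimum. Since $(w_n)$ is non-increasing, $w_\infty(y)=\lim_{n\to\infty}w_n(y)=\inf_{n\in\mathbb{N}}w_n(y)$ exists in $\underline{\mathbb{R}}$ for every $y\in Y$. To check upper semicontinuity of $w_\infty$, I would verify that every superlevel set is closed. For any $c\in\mathbb{R}$,
\begin{equation*}
\{y\in Y:w_\infty(y)\ge c\}=\bigcap_{n\in\mathbb{N}}\{y\in Y:w_n(y)\ge c\},
\end{equation*}
because $w_n(y)\ge c$ for all $n$ is equivalent to $\inf_n w_n(y)\ge c$. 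Each set on the right is closed by upper semicontinuity of $w_n$, and an arbitrary intersection of closed sets is closed. This yields part (a).

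For part (b), write $M_n:=\max_{y\in Y}w_n(y)$ and $M_\infty:=\max_{y\in Y}w_\infty(y)$; both maxima exist, because upper semicontinuous functions attain their supremum on a compact set (applied to each $w_n$ and, via part (a), to $w_\infty$). The easy inequality $\lim_{n\to\infty}M_n\ge M_\infty$ follows directly from $w_n\ge w_\infty$ pointwise, combined with the fact that $(M_n)$ is non-increasing (so the limit exists in $\underline{\mathbb{R}}$).

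The substantive step is the reverse inequality. Pick maximizers $y_n\in Y$ with $w_n(y_n)=M_n$. By compactness of $Y$, extract a subsequence $y_{n_k}\to y^*\in Y$. For any fixed $m\in\mathbb{N}$ and all sufficiently large $k$ with $n_k\ge m$, monotonicity gives $w_{n_k}(y_{n_k})\le w_m(y_{n_k})$, so
\begin{equation*}
\lim_{k\to\infty}M_{n_k}=\limsup_{k\to\infty}w_{n_k}(y_{n_k})\le\limsup_{k\to\infty}w_m(y_{n_k})\le w_m(y^*),
\end{equation*}
where the final inequality uses upper semicontinuity of $w_m$ at $y^*$. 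Letting $m\to\infty$ yields $\lim_{n\to\infty}M_n\le w_\infty(y^*)\le M_\infty$, which combined with the easy direction gives the claimed equality.

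The main obstacle is the last estimate: one must be careful that $w_m(y^*)$ is legitimately finite or handled in $\underline{\mathbb{R}}$, and that monotonicity is invoked correctly so that the bound $w_{n_k}(y_{n_k})\le w_m(y_{n_k})$ holds before taking the upper semicontinuity limit in the index of the sequence $y_{n_k}$ rather than in the function index. The ordering "fix $m$, vary $k$, then let $m\to\infty$" is what makes both compactness and upper semicontinuity do their work without collapsing.
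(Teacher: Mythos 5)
Your proof is correct. Note that the paper does not actually prove this lemma: it is quoted with a citation to Sch\"al's paper and to Theorem A.1.5 in B\"auerle--Rieder, so there is no in-paper argument to compare against. Your argument is the standard one found in those references: part (a) via the identity $\{w_\infty\ge c\}=\bigcap_n\{w_n\ge c\}$ between superlevel sets, and part (b) via the two inequalities, where the nontrivial direction extracts a convergent subsequence of maximizers $y_{n_k}\to y^*$ and uses the order of quantifiers ``fix $m$, let $k\to\infty$, then $m\to\infty$'' so that monotonicity ($w_{n_k}\le w_m$ for $n_k\ge m$) and upper semicontinuity of the fixed function $w_m$ at $y^*$ can both be applied. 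Your handling of the extended-real-valued case is also sound: all inequalities and limsups remain valid in $\underline{\mathbb{R}}$, and the degenerate case $M_n=-\infty$ (forcing $w_n\equiv-\infty$ and hence $w_\infty\equiv-\infty$) causes no difficulty.
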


In the proof of Theorem \ref{thm2} we shall refer to the dynamic programming operators defined in (\ref{T}) and (\ref{Tf}).
Moreover, we also define corresponding operators for $v\in {\cal M}^a_b(X)$ and $K\in\mathbb{N}$ as follows
$$T^K v(x):=\sup_{a\in A(x)}\left[u^K(x,a)+\int_X\delta(v(y))q(dy|x,a)\right],\quad x\in X,$$
and
$$T_{f}^K v(x)=u^K(x,f(x))+\int_X \delta(v(y))q(dy|x,f(x)),\quad x\in X,$$
where $f\in F$ and $u^K(x,a)=\max\{u(x,a),1-K\},$ $K\in\mathbb{N}.$ 
The recursive discounted utility functions with one-period utility $u^K$  
in the finite ($n$-periods)  and infinite time horizon for an initial state $x\in X$ and a policy $\pi\in\Pi$  
will be denoted by  $U_n^K(x,\pi)$ and $U^K(x,\pi),$ respectively.
 
\begin{lemma}\label{aux} For any $n\in\mathbb{N}$ and $f\in F$, it holds
$$\lim_{K\to\infty} U^K_n(x,f)=U_n(x,f),\quad x\in X.$$
\end{lemma}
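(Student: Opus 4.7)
The plan is to argue by induction on $n$, exploiting the fact that the truncated utilities $u^K(x,a)=\max\{u(x,a),1-K\}$ are non-increasing in $K$ with $\lim_{K\to\infty}u^K(x,a)=u(x,a)$ (this holds whether $u(x,a)$ is finite or equal to $-\infty$). Since $\delta$ is increasing by $(B2.2)$ and integration against $q$ preserves order, a short parallel induction on $n$ shows that, for each fixed $x$ and $f$, the map $K\mapsto U_n^K(x,f)$ is non-increasing and dominates $U_n(x,f)$ pointwise. The base case $n=1$ is immediate because $U_1^K(x,f)=u^K(x,f(x))$ converges pointwise to $u(x,f(x))=U_1(x,f)$.

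For the inductive step, the recursion for stationary $f$ reads
$$U_{n+1}^K(x,f)=u^K(x,f(x))+\int_X\delta\big(U_n^K(y,f)\big)q(dy|x,f(x)).$$
The first summand converges to $u(x,f(x))$ by the base case. By the inductive hypothesis $U_n^K(y,f)\searrow U_n(y,f)$ pointwise in $y$, and continuity of $\delta$ on $\mathbb{R}$ (Remark \ref{rem2}) together with $\delta(-\infty)=-\infty$ from $(B2.2)$ yield $\delta(U_n^K(y,f))\searrow\delta(U_n(y,f))$. To pass the limit through the integral, note that the truncated model with $K=1$ falls under Lemma \ref{lem6}, hence $U_n^1(\cdot,f)\in{\cal M}_b^d(X)$, and one can pick $M_1>0$ with $|U_n^1(y,f)|\le M_1\omega(y)$. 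Using $|\delta(z)|\le\gamma(|z|)$ (Remark \ref{rem2}), monotonicity of $\gamma$, and $(B2.3)(ii)$, we obtain the uniform bound
$$|\delta(U_n^K(y,f))|\le|\delta(U_n^1(y,f))|\le\gamma(M_1\omega(y))\le\omega(y)\gamma(\max\{M_1,1\}),\quad K\ge 1,$$
and $(B2.4)$ makes the dominating function $q(\cdot|x,f(x))$-integrable. The decreasing monotone convergence theorem, applied to the non-negative increasing sequence $\delta(U_n^1(\cdot,f))-\delta(U_n^K(\cdot,f))$ tending to $\delta(U_n^1(\cdot,f))-\delta(U_n(\cdot,f))$, then delivers convergence of the integrals in $\underline{\mathbb{R}}$, which closes the induction.

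The only delicate point is that $U_n(y,f)$ may take the value $-\infty$ on a $q$-positive set; standard dominated convergence is not directly available because no integrable \emph{lower} envelope is uniform in $K$. The fix is the decreasing MCT above, for which only a uniform integrable \emph{upper} envelope is needed, and this is precisely what $(B2.3)(ii)$ and $(B2.4)$ supply. Both sides of the identity $\lim_{K\to\infty}U_n^K(x,f)=U_n(x,f)$ are thus allowed to equal $-\infty$, consistent with Lemma \ref{l3}.
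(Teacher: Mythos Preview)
Your argument is essentially the paper's own: induction on $n$, continuity and monotonicity of $\delta$, and the monotone convergence theorem for the integral. The paper's proof is terser and simply invokes ``the monotone convergence theorem'' at the inductive step, whereas you spell out the monotonicity in $K$ and the integrability of $\delta(U_n^1(\cdot,f))$ that makes the subtraction legitimate.

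One slip to flag: the first inequality in your display, $|\delta(U_n^K(y,f))|\le|\delta(U_n^1(y,f))|$, is false in general. From $U_n^K\le U_n^1$ and $\delta$ increasing you only get $\delta(U_n^K)\le\delta(U_n^1)$; when both values are negative the absolute-value inequality reverses. Fortunately your actual argument does not use this: the monotone convergence theorem applied to the non-negative increasing sequence $\delta(U_n^1)-\delta(U_n^K)$ needs only that $\int_X|\delta(U_n^1(y,f))|\,q(dy|x,f(x))<\infty$, which is exactly what the second and third inequalities in the display (together with $(B2.4)$) deliver. So drop the first inequality and the proof stands.
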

\begin{proof}  We proceed by induction. For $n=1$ the fact is obvious. Suppose that
$$T^{K,(n)}_f{\bf 0}(x)=U^K_n(x,f)\to T^{(n)}_f{\bf 0}(x)=U_n(x,f)\quad\mbox{as}\ 
 K\to\infty, \  \mbox{for all}\  x\in X.$$
 Here, $T^{K,(n)}_f$ denotes the $n$-th composition of the operator $T^{K}_f$ with itself.
Then, by our induction hypothesis, our assumption that $\delta$ is continuous and increasing, and
the monotone convergence theorem, we infer that, for every $x\in X,$
\begin{eqnarray*}
\lim_{K\to\infty}T_f^K T_f^{K,(n)}{\bf 0} (x)&=&
\lim_{K\to\infty}\left(u^K(x,f(x))+\int_X\delta (T_f^{K,(n)}{\bf 0} (y))q(dy|x,f(x)) \right)\\
&=& u(x,f(x))+\int_X \delta (T_f^{(n)}{\bf 0} (y))q(dy|x,f)= T_f^{(n+1)}{\bf 0} (x).
\end{eqnarray*}
The lemma now follows by the induction principle. \hfill $\Box$
\end{proof}

\begin{proof} {\it of Theorem \ref{thm2}} Assume first $(W)$. The proof for $(S)$ is analogous with obvious changes. 
By Theorem \ref{thm1}, for any $K\in\mathbb{N},$ there exists a unique solution  
$v^{*,K}\in {\cal U}^d_b(X)$ to the Bellman  equation and, for each  $x\in X,$ 
$v^{*,K}(x) = \sup_{\pi\in\Pi} U^K(x,\pi).$  Since, $u^K \ge u,$ it follows that
$$
v^{*,K}(x) = \sup_{\pi\in\Pi} U^K(x,\pi)\ge \sup_{\pi\in\Pi} U(x,\pi)=\underline{v}^*(x), \quad x\in X.
$$
Clearly, the sequence $(v^{*,K})$ is non-increasing, thus $v_\infty(x) := \lim_{K\to\infty} v^{*,K}(x)$ 
exists in $\underline{\mathbb{R}}$ for every $x\in X,$ and consequently, 
 \begin{equation}\label{nie1}
v_\infty(x) \ge \underline{v}^*(x), \quad x\in X.
\end{equation}

From Theorem \ref{thm1} we know that $v^{*,K}$ is a solution to the  equation 
$$ 
v^{*,K}(x)=\sup_{a\in A(x)}\left[u^K(x,a)+\int_X \delta(v^{*,K}(y))q(dy|x,a)\right], \quad x\in X.
$$
Since both sequences $(v^{*,K}(x)),$ $x\in X,$ and $(u^K(x,a)),$ $(x,a)\in D,$ are non-increasing, it follows from Lemma \ref{sch}, 
our assumption that $\delta$ is increasing and  continuous  and 
the monotone convergence  theorem    that
\begin{eqnarray}\label{oegr}\nonumber
v_\infty(x) &=&  \lim_{K\to\infty} v^{*,K}(x) =
 \lim_{K\to\infty}  \max_{a\in A(x)}\left[u^K(x,a)+\int_X \delta(v^{*,K}(y))q(dy|x,a)\right]\\\nonumber
&=&\max_{a\in A(x)}\lim_{K\to\infty}\left[u^K(x,a)+\int_X \delta(v^{*,K}(y))q(dy|x,a)\right]\\
&=& \max_{a\in A(x)}\left[u(x,a)+\int_X \delta(v_\infty(y))q(dy|x,a)\right],\quad x\in X.
\end{eqnarray}
Moreover, in case  $(W),$ we have that $v_\infty \in {\cal U}^a_b(X).$ 
From the obvious inequalities $u(x,a)\le u^1(x,a)\le c\omega(x),$ $(x,a)\in D,$
it follows that $v_\infty (x) \le\tilde{c}\omega(x)$ for $\tilde{c}= \tilde{L}(c) $ and for all $x\in X$ (put $z=c$ in   
(\ref{tildeL})).   By Lemma \ref{lem1}, there exists a maximiser $\tilde{f}\in F$ 
on the right-hand side of equation (\ref{oegr}) and we have 
$$v_\infty (x) = u(x,\tilde{f}(x))+\int_X \delta(v_\infty(y))q(dy|x,\tilde{f}(x))=T_{\tilde{f}} v_\infty(x),\quad x\in X.$$
Iterating this equation, we obtain that
$$v_\infty (x)= T^{(n)}_{\tilde{f}} v_\infty(x) \le T^{K,(n)}_{\tilde{f}}v_\infty(x)
\le T^{K,(n)}_{\tilde{f}} \tilde{c}\omega (x), \  \mbox{for all}\ x\in X\ \mbox{and }\ k\in\mathbb{N}.$$
From (\ref{nnnn}) in the proof of Lemma  \ref{l3},  
(with $c$ replaced by $\tilde{c},$  $u$ replaced by $u^K$ 
and $\pi_1=\cdots=\pi_n= \tilde{f}$) 
we infer  that 
$$v_\infty (x)\le T^{K,(n)}_{\tilde{f}}\tilde{c}\omega(x) \le U^K_n(x,\tilde{f})+\tilde{\gamma}^{(n)}(\tilde{c})\omega(x),\ 
\mbox{for all}\   x\in X\ \mbox{and } n\in \mathbb{N}.$$
Letting $K\to \infty$ in the above inequality and making use of Lemma \ref{aux} yield 
that $$v_\infty (x)\le U_n(x,\tilde{f})+\tilde{\gamma}^{(n)}(\tilde{c})\omega(x)\ \mbox{ for all}\ x\in X.$$
Hence,
$$v_\infty (x)\le \lim_{n\to\infty}\left(  U_n(x,\tilde{f})+\tilde{\gamma}^{(n)}(\tilde{c})\omega(x)
\right)= U(x,\tilde{f})\le \sup_{\pi\in\Pi} U(x,\pi)=\underline{v}^*(x),\quad x\in X.$$
From this inequality and (\ref{nie1}),  
 we conclude that
$$
v_\infty (x)= U(x,\tilde{f})= \sup_{\pi\in\Pi} U(x,\pi)= \underline{v}^*(x),\ \mbox{for all}\  x\in X,
$$
and the proof is finished.
\hfill $\Box$
\end{proof}

\section{Computational Issues}
In this section we consider the unbounded utility setting as in Theorem \ref{thm1}. 

\subsection{ \bf Policy iteration}
An optimal stationary  policy can be computed as a limit point of a sequence of decision rules.  
In what follows, we define  $V_0={\bf 0}$ and  $V_n := T^{(n)} {\bf 0}$ for $n\in\mathbb{N}$. Next for fixed $x\in X,$ let
$$ A_{n }^\ast(x) := 
\mbox{Arg}\max_{a\in A(x)} \Big(
u(x,a) + \int_X \delta(V_{n-1}(y))q(dy|x,a)\Big)$$ for $n\in\mathbb{N}. $  
In the same way, let
$$ A^\ast(x) := 
\mbox{Arg}\max_{a\in A(x)} \Big(
u(x,a) + \int_X \delta(v^*(y))q(dy|x,a)\Big).$$ 
By $ Ls A_n^\ast(x),$ we denote
the {\em upper limit of the set sequence}  $(A_n^\ast(x)),$ that is,
the set of all {\it accumulation points} 
of  sequences $ (a_n)$ with 
   $a_n\in A_n^\ast(x) $ for all $n\in\mathbb{N}.$ 

The next result states that an optimal stationary policy can be obtained from 
accumulation points of sequences of maximisers of  recursively computed value functions.  
Related results for dynamic programming with standard discounting are discussed, for example, in \cite{br11} and \cite{schal}.

\begin{theorem}\label{theo:policyiteration}
Under assumptions of Theorem \ref{thm1}, we obtain: $\emptyset \neq Ls A_n^\ast(x) \subset   A^\ast(x)$ for all $x\in{X}$.
\end{theorem}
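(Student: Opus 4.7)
The plan is to split the argument into two parts: non-emptiness of $Ls\, A_n^\ast(x)$ and the inclusion $Ls\, A_n^\ast(x)\subset A^\ast(x)$. For non-emptiness, I would note that for each $n$ the set $A_n^\ast(x)$ is non-empty by Lemma \ref{lem1}(a) applied to $SV_{n-1}$ (which is well-defined and admits a maximiser because $V_{n-1}\in{\cal U}_b^d(X)$ by Theorem \ref{thm1} combined with Lemma \ref{lem2}). Picking any $a_n\in A_n^\ast(x)$, the sequence $(a_n)$ lies in the compact set $A(x)$ (assumption $(W2.1)$, or $(S2.1)$), hence has a convergent subsequence, whose limit belongs to $Ls\, A_n^\ast(x)$.

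For the inclusion, I would fix $a^\ast\in Ls\, A_n^\ast(x)$ and a subsequence $a_{n_k}\to a^\ast$ with $a_{n_k}\in A_{n_k}^\ast(x)$. By the definition of $a_{n_k}$,
$$V_{n_k}(x)=SV_{n_k-1}(x,a_{n_k})=Sv^\ast(x,a_{n_k})+\bigl[SV_{n_k-1}(x,a_{n_k})-Sv^\ast(x,a_{n_k})\bigr].$$
The key technical step is to show that the bracketed error vanishes uniformly in $a$. Using $(B2.1)$, $(B2.3)(ii)$ and $(B2.4)$,
$$|Sv^\ast(x,a)-SV_{n-1}(x,a)|\le\int_X\gamma(|v^\ast(y)-V_{n-1}(y)|)q(dy|x,a)\le\alpha\,\omega(x)\,\gamma(\|v^\ast-V_{n-1}\|_\omega),$$
where in the last step I pull $\|v^\ast-V_{n-1}\|_\omega$ out of $\gamma$ via $(B2.3)(ii)$ and use $(B2.4)$. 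Since $\gamma$ is continuous at $0$ with $\gamma(0)=0$ (Remark \ref{rem2}) and $\|V_{n-1}-v^\ast\|_\omega\to 0$ by Theorem \ref{thm1}(a), this error tends to $0$ as $k\to\infty$, uniformly in $a\in A(x)$. Combining with $V_{n_k}(x)\to v^\ast(x)$ yields $Sv^\ast(x,a_{n_k})\to v^\ast(x)$.

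To conclude, I would invoke upper semicontinuity of $a\mapsto Sv^\ast(x,a)$: under $(W)$ this is from Lemma \ref{lem2}(a) (since $v^\ast\in{\cal U}_b^d(X)$), while under $(S)$ it is the explicit conclusion of Lemma \ref{lem2}(b). Together with $a_{n_k}\to a^\ast$ this gives
$$v^\ast(x)=\lim_{k\to\infty}Sv^\ast(x,a_{n_k})\le Sv^\ast(x,a^\ast).$$
The reverse inequality $Sv^\ast(x,a^\ast)\le\sup_{a\in A(x)}Sv^\ast(x,a)=Tv^\ast(x)=v^\ast(x)$ is immediate from Theorem \ref{thm1}(a). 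Hence $Sv^\ast(x,a^\ast)=v^\ast(x)$, i.e.\ $a^\ast\in A^\ast(x)$.

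The only real obstacle is the uniform-in-$a$ estimate on $|Sv^\ast(x,a)-SV_{n-1}(x,a)|$, which needs both the weighted-norm contraction machinery from Section 4 and the compatibility condition $(B2.3)(ii)$ relating $\gamma$ and $\omega$. Everything else is a straightforward combination of the fixed-point identity $Tv^\ast=v^\ast$ with the upper semicontinuity of $Sv^\ast$ in the action variable.
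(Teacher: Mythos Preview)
Your proof is correct and shares the paper's core strategy: establish uniform-in-$a$ convergence of $SV_{n-1}(x,\cdot)$ to $Sv^\ast(x,\cdot)$ on the compact set $A(x)$, then conclude that accumulation points of maximisers are maximisers of the limit. The difference lies only in how the last step is handled. The paper derives a Cauchy-type estimate $|v_m(a)-v_n(a)|\le\omega(x)\tilde{\gamma}^{(n)}(\tilde{L}(z))$ for the functions $v_n(a):=SV_{n-1}(x,a)$ and then invokes Theorem~A.1.5 of \cite{br11}, a general result about maximisers of sequences of upper semicontinuous functions satisfying $v_m\le v_n+\varepsilon_n$. You instead compare directly with the limit function $Sv^\ast(x,\cdot)$ and write out the elementary argument: $Sv^\ast(x,a_{n_k})\to v^\ast(x)$ combined with upper semicontinuity of $a\mapsto Sv^\ast(x,a)$ forces $Sv^\ast(x,a^\ast)\ge v^\ast(x)$. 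Your route is self-contained and avoids the external reference; the paper's version is terser once that lemma is granted. The uniform estimate itself is essentially the same in both proofs, and your use of $(B2.3)(ii)$ and $(B2.4)$ to pull $\omega$ through $\gamma$ mirrors the paper's computation.
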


\begin{proof}
Fix $x\in{X}$  and define for $n\in\mathbb{N}$ the functions $v_n :
A(x) \to\mathbb{R}$ by
$$v_n(a) := u(x,a) +  \int_X \delta(V_{n-1}(y))q(dy|x,a).$$
By Lemma \ref{lem2}, $v_n $ is upper semicontinuous on $A(x).$
Moreover, 
 for $m\ge n$ and $a\in A(x)$,  we have 
\begin{eqnarray}
\label{vmn}
 |v_m(a)-v_n(a)| &\le& |\int_X \delta\big( T^{(m-1)}{\bf 0}(y)\big)q(dy|x,a) -
\int_X \delta\big( T^{(n-1)}{\bf 0}(y)\big)q(dy|x,a) |\\ &\le&
\int_X \gamma\big( |T^{(m-1)}{\bf 0}(y)-T^{(n-1)}{\bf 0}(y)|\big)q(dy|x,a).\nonumber
\end{eqnarray}
Using the arguments as in the proof of (\ref{cauchyseq}), we infer that
\begin{eqnarray}
\label{vmncd}
&& |T^{(m-1)}{\bf 0}(y)-T^{(n-1)}{\bf 0}(y)|\le \sup_{\pi\in\Pi}|U_{m-1}(y,\pi)-
U_{n-1}(y,\pi)|\\ 
&\le& \omega(y)\sup_{\pi\in\Pi}\|U_{m-1}(\cdot,\pi)-U_{n-1}(\cdot,\pi)\|_{\omega} \le \omega(y)\tilde{\gamma}^{(n-1)}(\tilde{L}(z)). 
\nonumber 
\end{eqnarray}
From (\ref{vmn}) and (\ref{vmncd}), it follows that
\begin{eqnarray*}
|v_m(a)-v_n(a)|&\le& \int_X \gamma\big(\omega(y)\tilde{\gamma}^{(n-1)}
(\tilde{L}(z))\big)q(dy|x,a)\\ &\le&
\int_X \omega(y)\gamma\big(\tilde{\gamma}^{(n-1)}\big(\tilde{L}(z))\big)q(dy|x,a)\le 
\omega(x)\tilde{\gamma}^{(n)}\big(\tilde{L}(z)\big).
\end{eqnarray*}
for all $a\in A(x).$  
Hence
$$
\max_{a\in A(x)}|v_m(a)-v_n(a)|\le \omega(x)\tilde{\gamma}^{(n)}\big(\tilde{L}(z)\big) =:\varepsilon_n.
$$
 This then implies that $v_m(a) \le v_n(a)+\varepsilon_n$ 
for $m\ge n$ and all $a\in A(x).$ Since $ \varepsilon_n\to 0$ 
as $n\to\infty,$   
the result follows  from Theorem A.1.5. in \cite{br11} and the fact that $v^*=\lim_{n\to \infty} V_n$ (see Remark \ref{valiter}).
 \hfill $\Box$
\end{proof}

\subsection{\bf Howard's policy improvement  algorithm}

The algorithm proposed  by  Howard \cite{howard}
is widely discussed in the literature on Markov decision processes (dynamic programming), see \cite{br11,hll,hl} 
and their references. It may be also applied to models with the recursive discounted utility.
 
For any  $f\in F$ we shall use the following notation $U_f=U(\cdot,f)$. 

\begin{theorem}\label{theo:howard} Let assumptions of Theorem \ref{thm1} be satisfied.
For any $f\in F,$ denote
$$ A(x,f) := \big\{ a\in A(x)\; |\; u(x,a)+\int_X \delta(U(y,f)) q(dy|x,a) > U(x,f)\big\}, \quad
x\in {X}.$$ Then, the following holds:
\begin{enumerate}
\item[(a)] If for some Borel set  ${X}_0\subset {X}$ we define a decision
rule $g$ by
\begin{eqnarray*}
  g(x) \in A(x,f) && \;\mbox{for}\; x\in {X}_0, \\
  g(x) = f(x) && \;\mbox{for}\; x\notin {X}_0,
\end{eqnarray*}
then $U_g\ge U_f$ and $U_g(x) > U_f(x)$ for $x\in {X}_0$. In this case, the policy
$g$ is called {\em an improvement } of $f$.
\item[(b)] If $A(x,f) = \emptyset$ for all $x\in {X}$, then $U_f=v_*$, i.e.,
the stationary policy $f\in F$ is optimal.
\end{enumerate}
\end{theorem}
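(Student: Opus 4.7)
The plan is to base both parts on the monotonicity of $T_g$, which is immediate from $(B2.2)$ since $\delta$ is increasing, together with the fact that $T_g$ fits the framework of Theorem \ref{thm1}. More precisely, $g \in F$ is a stationary policy, so the argument used inside the proof of Theorem \ref{thm1} shows that $T_g : \mathcal{M}^d_b(X) \to \mathcal{M}^d_b(X)$ is a generalised contraction in the sense of Lemma \ref{lem4}, with $U_g = U(\cdot,g)$ as its unique fixed point, and $T_g^{(n)} h \to U_g$ in $\|\cdot\|_\omega$ (hence pointwise) for every $h \in \mathcal{M}^d_b(X)$. By Lemma \ref{lem6}, $U_f \in \mathcal{M}^d_b(X)$, so we are allowed to feed $h := U_f$ into this iteration.

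For part (a), I would first verify that $T_g U_f \ge U_f$ pointwise, with strict inequality on $X_0$. On $X_0$ this is exactly the defining inequality of $A(x,f)$ evaluated at the admissible action $a = g(x) \in A(x,f)$; on $X \setminus X_0$ we have $g(x)=f(x)$, so $T_g U_f(x) = T_f U_f(x) = U_f(x)$ by the $T_f$-fixed-point property of $U_f$. Monotonicity of $T_g$ then gives the non-decreasing chain
$$U_f \le T_g U_f \le T_g^{(2)} U_f \le \cdots,$$
and passing to the limit via the generalised contraction yields $U_g \ge U_f$ on $X$. For the strict inequality on $X_0$, the same monotone chain gives $U_g(x) \ge T_g U_f(x)$, and combining this with $T_g U_f(x) > U_f(x)$ established in the first step finishes part (a).

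For part (b), the assumption $A(x,f)=\emptyset$ for every $x$ means that for all $x \in X$ and every $a \in A(x)$,
$$u(x,a) + \int_X \delta(U_f(y)) q(dy|x,a) \le U_f(x).$$
Taking the supremum in $a$ gives $T U_f \le U_f$. Conversely, $T U_f \ge T_f U_f = U_f$, so $T U_f = U_f$. Thus $U_f$ is a fixed point of $T$ in $\mathcal{M}^d_b(X)$, and the uniqueness clause of Theorem \ref{thm1}(a) forces $U_f = v^*$, i.e.\ $f$ is optimal.

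There is no genuinely hard step; the only subtlety is bookkeeping, namely making sure that every function involved in the iteration lies in $\mathcal{M}^d_b(X)$ so that Lemma \ref{lem4} applies to $T_g$, and that weighted-norm convergence $T_g^{(n)} U_f \to U_g$ is used only to pass the inequality to the pointwise limit.
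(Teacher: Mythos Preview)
Your proof is correct and follows essentially the same route as the paper: establish $T_g U_f \ge U_f$ with strict inequality on $X_0$, iterate using monotonicity of $T_g$, and pass to the limit $U_g$ via the generalised contraction property of $T_g$; for part (b), sandwich $T U_f$ between $T_f U_f = U_f$ and $U_f$ and invoke uniqueness of the fixed point from Theorem~\ref{thm1}. Your write-up is in fact a bit more explicit than the paper's about why $U_f \in \mathcal{M}^d_b(X)$ and why $T_g^{(n)} U_f \to U_g$, which is welcome.
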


\begin{proof} 
(a) From the definition of $g$ we obtain $$ T_g U_f (x) > U(x,f),$$ if
  $x\in {X}_0$ and $T_g U_f(x) = U(x,f),$ if $x\notin {X}_0$. Thus by induction
$$ U_f (x)\le T_g U_f(x)\le T_g^{(n)} U_f(x),$$ where the first inequality is strict for $x\in {X}_0$. 
Letting $n\to\infty,$ it follows  as in the proof of Theorem \ref{thm1} that
$U_f \le U_g$ and in particular $U(x,f) < U(x,g)$ for $x\in {X}_0$.
  
(b) The condition $A(x,f) = \emptyset$ for all $x\in {X}$ implies $T U_f \le U_f$. 
Since we always have $T U_f \ge T_f U_f = U_f,$ we obtain
$T U_f = U_f$.   From Theorem \ref{thm1} we know that $T$ 
has a unique  fixed point $v^*\in {\cal U}^d_b(X)$ (under assumptions $(W)$) or $v^*\in {\cal M}^d_b(X)$ (under assumptions $(S)$). 	
Thus $U_f =v^*$. \hfill $\Box$
\end{proof}

Altogether, we have the following algorithm for the computation of the value function and an optimal stationary policy:

\begin{enumerate}
\item Choose $f_0\in F$ arbitrary and set $k=0$.
\item Compute $U_{f_k} $ as the unique solution $v\in {\cal M}^d_b({X})$ of the equation
$v=T_{f_k}v$.
\item Choose $f_{k+1}\in F$ such that 
$$ f_{k+1}(x) \in 
\mbox{Arg}\max_{a\in A(x)} \Big(
u(x,a) + \int_X \delta(U_{f_k}(y))q(dy|x,f_k(x))\Big)$$  
and set $f_{k+1}(x)=f_{k}(x)$ if possible.
If $f_{k+1}=f_{k},$  then $U_{f_{k}}=v^*$ and $(f_k,f_k,\ldots)$ is optimal stationary policy.
Else set $k:=k+1$ and go to   step 2.
\end{enumerate}

It is obvious that the algorithm stops in a finite number of steps
if the state and action sests are finite. In general, as in the standard discounted case
(see, e.g., Theorem 7.5.1 and Corollary 7.5.3 in \cite{br11}) we can only claim that
$$v^*(x)= \lim_{k\to\infty} U(x,f_k).$$

\section{Applications}\label{sec:appl}

\begin{example} {\it (Stochastic optimal growth model 1)}  There is a single good available to the consumer. 
The level of this good at the beginning of period $t\in\mathbb{N}$  is given  by  $x_t\in X:=[0,\infty).$
The consumer has  to divide $x_t$ between consumption $a_t\in A:=[0,\infty)$ 
 and investment (saving) $y_t=x_t-a_t$. Thus $A(x_t) = [0,x_t]$.
From consumption $a_t$ the consumer receives utility $u(x_t,a_t)=\sqrt{a_t}$. 
Investment, on the other hand, is used for production with input $y_t$ yielding output 
$$x_{t+1} = y_t\cdot \xi_t,  \qquad t\in\mathbb{N},$$
 where  $(\xi_t)$ is a sequence of i.i.d. shocks with distribution $\nu$ being a probability measure on $[0,\infty).$ 
 The initial state $x=x_1 \in \mathbb{R}_+$ is non-random. Further, we assume that 
 $$\bar{s} =\mathbb{E}\xi_t =\int_0^\infty s\nu(ds)\le 1.$$
 Let the discount function be as follows
$$\delta(z)=(1-\varepsilon)z+\varepsilon\ln(1+z), \quad z\ge 0$$
with some $\varepsilon\in (0,1).$
We observe that there is no constant  $\beta\in (0,1)$ such that $\delta(z)<\beta z$ for all $z>0.$ 
We define $\gamma:=\delta$ and note that $z\mapsto \gamma(z)/z$ is non-increasing. 
Hence, $\gamma$ and $\delta$ satisfy 
assumptions $(B2.1)$-$(B2.3).$ 
Now we show that assumptions  $(A)$, $(W)$ and  $(B2.4)$ 
are satisfied with an appropriate function $\omega.$ 
With this end in view, put 
$$\omega(x)=\sqrt{x+1}, \quad x\in X.$$
Then $|u(x,a)|\le \sqrt{x}\le\sqrt{x+1}$ for $a\in A(x)=[0,x]$ and $x\in X.$ Thus, $(A)$ holds. 
Furthermore, by Jensen's inequality we have
$$\int_X \omega(y)q(dy|x,a)=\int_0^\infty \sqrt{(x-a)s+1}\ \nu(ds)\le\sqrt{ (x-a)\bar{s}+1}\le \sqrt{x+1}$$
for $a\in A(x)$ and $x\in X.$
Hence, $(B2.4)$ is satisfied with $\alpha=1.$ It is   obvious that conditions $(W)$
are also met. Therefore, from Theorem \ref{thm1}, there exists an upper semicontinuous  solution 
to the Bellman equation and a stationary optimal policy $f^*\in F.$
\end{example}

\begin{example}   {\it (Stochastic optimal growth model 2)}  We study a modified problem model from Example 1. We assume that
the next state evolves according to the equation
$$x_{t+1} = y_t^\theta\cdot \xi_t+(1-\rho)\cdot y_t,\qquad t\in\mathbb{N}$$
where  $\rho$, $\theta$  are some constants from the interval $(0,1).$ Here,  $(\xi_t)$
 is again a sequence of i.i.d. shocks with distribution 
$\nu$ and the expected value $\bar{s}>0$. 
The utility  function is now $u(x,a)=a^\sigma$ with $\sigma\in (0,1).$ Let  $\omega(x)=(x+r)^\sigma,$ where $r\ge 1.$
Assume now that the discount  function is of the form
$$\delta(z)=(1-2\varepsilon)z+\varepsilon\ln(1+z), \quad z\ge 0$$
with some $\varepsilon\in (0,1).$ Then, $\delta(z)\le(1-\varepsilon)z$ for $z\ge 0.$ By 
Example 1 in \cite{jet} (see also Example 2 in \cite{jano}), we have that
$$\int_X \omega(y)q(dy|x,a)\le \left(1+\frac{(\bar{s}/\rho) ^{\frac 1{1-\theta}}}r\right)^\sigma\omega(x),\quad x\in X.$$
Hence, in $(B2.4)$, we set
$$\alpha:= \left(1+\frac{(\bar{s}/\rho)^{\frac 1{1-\theta}}}r\right)^\sigma>1$$
and $\alpha \gamma(z)< z$ is satisfied with $\gamma:=\delta$ and   $r$ such that $\alpha(1-\varepsilon)<1.$
Clearly, all conditions $(A)$, $(B)$ and $(W)$   are satisfied. 
($B(2.3) $  follows from the fact that $z\mapsto\gamma(z)/z$ is non-increasing.)
By Theorem  \ref{thm1}, the value function $v^*$ is upper semicontinuous and satisfies the Bellman equation.
Moreover, there exists an optimal stationary policy.
\end{example}

\begin{example}  {\it (Inventory model)} A manager inspects the stock at each period 
$t\in\mathbb{N}.$ 
The number of units in stock is $x_t\in X:=[0,\infty).$ 
He can sell $\min\{x_t,\Delta_t\}$ units in period $t$, 
where $\Delta_t\ge 0$ is a   random variable representing an unknown demand. 
At the end of period $t$ he can also order any amount 
$a_t\in A:=[0,\hat{a}]$  of new goods to be delivered 
at the beginning of next period at a cost $C(a_t)$ paid immediately. 
Here $\hat{a}$ is some positive constant.
Moreover, the function $C$ is bounded above by $\hat{C}$, lower semicontinuous,  
non-f and $C(0)=0.$ 
The state equation is of the form
$$x_{t+1}=x_t-\min\{x_t,\Delta_t\}+a_t,\quad\mbox{for } t\in\mathbb{N},$$
where $(\Delta_t)$ is a sequence of i.i.d. random variables such that each $\Delta_t$ follows a 
continuous distribution
$\Phi$ and $\mathbb{E}\Delta_t<\infty.$ 
The manager considers a recursive discounted utility with a discount  function $\delta$ 
satisfying $(B2.1)$-$(B2.3)$  (with $\omega =1$). 
This model can be viewed as a Markov decision process, in which 
$u(x,a):=p\mathbb{E}\min\{x,\Delta\}-C(a)$ is the one period bounded utility function and $p$ denotes the unit stock price.  
(Here  $\Delta= \Delta_t$ for any fixed $t\in\mathbb{N}.$) 
Clearly, 
$ -\hat{C}\le u(x,a)\le p\mathbb{E}\Delta.$ Next note that
the transition probability $q$ is of the form
$$
q(B|x,a)=\int_{0}^{\infty}1_{B}(x-\min\{x,y\}+a)dF(y),
$$
where $B\subset X$ is a 
Borel set, $x\in X$, $a\in A.$ 
If $\phi$ is a bounded continuous function on $X$, then the integral
\begin{eqnarray*}
\int_{X}\phi(y)q(dy|x,a)&=&\int_{0}^{\infty}\phi(x-\min\{x,y\}+a)dF(y )\\
&=&\int_{0}^{x}\phi(x-y+a)dF(y)+
\int_{x}^{\infty}\phi(a)dF(y)\\
&=&\int_{0}^{x}\phi(x-y+a)dF(y)+
\phi(a)(1-F(x))
\end{eqnarray*}
depends continuously on $(x,a).$
Hence, the model satisfies assumptions $(W2.1)$-$(W2.3)$. Therefore, by Theorem \ref{thm1},   
there exists a bounded upper semicontinuous solution to the Bellman  equation and an optimal  stationary policy $f^*\in F.$
\end{example}

\begin{example}  {\it (Stopping problem)}
We now decribe a stopping problem with non-linear discounting. 
Suppose the Borel state space is $X$  and there is an (uncontrolled) Markov chain with an initial distribution $q_0$ 
and the transition probability  $q(\cdot|x).$ By $\mathbb{P}$ we denote the probability measure on the product space 
$X^\infty$ of all trajectories of the Markov chain induced by $q_0$ and $q.$  
At each period the controller has to decide whether to stop the process and receive the reward $R(x),$ where 
$x$ is the current state or to continue. In the latter case the reward $C(x)$ (which might be a cost) is received. 
The aim is to find a stopping time such that the recursive  discounted reward is maximized. 
We assume here that the controller has to stop with probability one. 
This problem is a special case of the more general model of Section 2. We have to choose here
\begin{itemize}
  \item[(i)] $X\cup \{\infty\}$ is the state space where $\infty$ is an  absorbing state which indicates that the process is already stopped,
  \item[(ii)] $A:=\{0,1\}$ where $a=0$ means continue and $a= 1$ means
  stop,
  \item[(iii)] $A(x):= A$ for all $x\in X\cup \{\infty\}$,
  \item[(iv)] ${q}(B | x,0):=q(B|x)$ for $x\in X,$ $ B$ a Borel set and $q(\{\infty\} | x,1)=1$ for $x\in X$ and $q(\infty|\infty,\cdot)=1$,
  \item[(v)] ${u}(x,a) := C(x)(1-a)+ R(x) a $ for $x\in X$ and $u(\infty,\cdot)=0$.
 \end{itemize}
We assume now that $|C(x)|\le \omega(x)$ and $|R(x)|\le \omega(x)$ which implies $(A)$ and we assume $(B).$  
The   optimisation problem   is considered 
with the recursive discounted utility and an  interpretation that 
the receiving  of   rewards (costs) is stopped after a random time. This random time is a stopping time 
with respect to the  filtration generated by observable states (for more details see Chapter 10 in \cite{br11}).
\end{example}

\begin{proposition}\label{thm:optimalstop} If the above 
assumptions on the stopping problem are satisfied,  
then \\\noindent
(a) there exists a  function $v^*\in {\cal M}^d_b(X)$   such that
\begin{equation}\label{oeu} 
v^*(x)=\max\left\{R(x); \; C(x) +\int_X \delta(v^*(y))q(dy|x)\right\}.
\end{equation}
(b)  Moreover, define $f^*(x)=1$ if $v^*(x)=R(x)$ and $f^*(x)=0$
 else and let $\tau^* := \inf\{n\in\mathbb{N} : f^*(x_n)=1\}$. If $\mathbb{P}(\tau^*<\infty)=1,$ then $\tau^*$  is
  optimal for the stopping problem and $v^*(x)=\sup_{\pi\in\Pi}U(x,\pi).$
\end{proposition}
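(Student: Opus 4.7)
The plan is to recognise the stopping problem as the MDP of items $(i)$--$(v)$ on the enlarged state space $X\cup\{\infty\}$ and then invoke Theorem~\ref{thm1} under assumptions $(S)$. Extend the weight by $\omega(\infty):=1$. With this convention assumption $(A)$ holds since $|u(x,a)|\le\max\{|C(x)|,|R(x)|\}\le\omega(x)$ on $X$ and $u(\infty,\cdot)=0$; hypotheses $(B2.1)$--$(B2.3)$ are directly assumed on the discount function; and $(B2.4)$ transfers to the augmented kernel because for $a=0$ it is the stated bound on $q$, for $a=1$ the transition is the point mass at $\infty$ with $\omega(\infty)=1\le\omega(x)$, and $\infty$ is absorbed. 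Since $A(x)=\{0,1\}$ is finite, the conditions $(S2.1)$--$(S2.4)$ are automatic.

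Theorem~\ref{thm1} then furnishes a unique $v^*\in{\cal M}^d_b(X\cup\{\infty\})$ with $Tv^*=v^*$, a stationary maximiser $f^*\in F$ with $T_{f^*}v^*=v^*$, and $v^*(x)=\sup_{\pi\in\Pi}U(x,\pi)$. At the absorbing state the Bellman equation reduces to $v^*(\infty)=\delta(v^*(\infty))$, and the bound $|\delta(z)|\le\gamma(|z|)<|z|$ for $z\neq 0$ (Remark~\ref{rem2} combined with $(B2.1)$) together with $\delta(0)=0$ forces $v^*(\infty)=0$. Plugging this back into the Bellman equation at $x\in X$ and writing out the two-element maximum over $A(x)=\{0,1\}$ gives precisely (\ref{oeu}), proving $(a)$.

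For $(b)$, the maximiser provided by Theorem~\ref{thm1} can be chosen as the rule in the statement: $f^*(x)=1$ iff $v^*(x)=R(x)$. Under $f^*$ the augmented chain follows $q(\cdot|x)$ until the first visit to the stopping set $\{x:f^*(x)=1\}$, whereupon it jumps to $\infty$ and accumulates zero utility; this first-hitting time is $\tau^*$. Any stopping rule $\tau$ for the original chain is encoded as the policy that selects $a=1$ exactly at time $\tau$, and conversely any policy in $\Pi$ that chooses $a=1$ almost surely at some finite time yields an admissible stopping rule. Hence $\sup_{\pi\in\Pi}U(x,\pi)$ dominates the supremum over admissible stopping rules, and the hypothesis $\mathbb{P}(\tau^*<\infty)=1$ ensures that the MDP-optimal value $v^*(x)=U(x,f^*)$ is itself attained by a genuine stopping time, namely $\tau^*$; consequently $\tau^*$ is optimal and the displayed identity holds.

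The main technical point is not the existence of $v^*$---this comes for free from Theorem~\ref{thm1}---but the identification of an MDP-optimal stationary policy with a bona fide stopping rule for the original chain. The assumption $\mathbb{P}(\tau^*<\infty)=1$ is exactly what is needed to transfer optimality from the MDP back to the stopping problem; the short uniqueness argument that gives $v^*(\infty)=0$ is the only other step requiring care.
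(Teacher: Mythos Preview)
Your proof is correct and follows the paper's own approach: verify that the augmented MDP satisfies assumptions $(A)$, $(B)$ and $(S)$ (the latter trivially, since $A(x)=\{0,1\}$), then invoke Theorem~\ref{thm1}. You have in fact supplied more detail than the paper's one-line proof, in particular the argument that $v^*(\infty)=0$ and the explicit correspondence between stationary policies and stopping times, both of which the paper leaves implicit.
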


 The action space $A$ consists of two elements only, 
so assumptions $(S)$ are  satisfied. In the above description we already
assumed $(A)$ and $(B).$ 
Therefore the result follows from Theorem \ref{thm1}

Let us consider a special example. 
Imagine a person who wants to sell her house. At the beginning of
each week she receives an offer, which is randomly distributed over
the interval $[m,M]$  with $0<m<M$. The offers are independent and
identically distributed with distribution $q$. The house seller has
to decide immediately whether to accept or reject this offer. If she
rejects, the offer is lost and she has maintenance cost. Which offer should she
accept in order to maximise her expected reward?

Here, we have $X:=[m,M]$,  $C(x)\equiv -c$  and $R(x):=x$. 
From Proposition \ref{thm:optimalstop} we obtain that the value function satisfies
$$v^*(x)=\max\left\{x; -C +\int_{[m,M]} \delta(v_*(y))q(dy)\right\}.$$ 
Note that $C^*:=\int_{[m,M]} \delta(v^*(y))q(dy)$
 is obviously a constant independent of $x$. Thus the optimal strategy is to accept the first offer which is above $-C+C^*$. 
 The corresponding stopping time is geometrically distributed and thus certainly satisfies  $\mathbb{P}(\tau^*<\infty)=1$. 
 Moreover, it is not difficult to see that whenever we have two discount functions $\delta_1,\delta_2,$ 
 which satisfy assumptions $(B)$ and which are ordered, i.e.,  $\delta_1\le \delta_2$, then $C_1^* \le C_2^*$  because the operator $T$
 is monotone. Thus, with stricter discounting we will stop earlier.


\begin{thebibliography}{99.}

\bibitem{balbus} Balbus, {\L.}:
On recursive utilities with non-affine aggregator and conditional certainty equivalent.
Econom. Theory, https://doi.org/10.1007/s00199-019-01221-8 (2019)


\bibitem{jet} B\"auerle, N.,  Ja\'skiewicz A.:
Stochastic optimal growth model with risk-sensitive preferences. J. Econom. Theory {\bf 173}, 181-200 (2018)

\bibitem{br11} B\"auerle, N., Rieder U.:
Markov Decision Processes with Applications to Finance. Springer-Verlag, Berlin Heidelberg (2011)

\bibitem{b3b}
Becker, R.A., Boyd III, J.H.: Capital Theory, Equilibrium Analysis and
Recursive Utility. Blackwell Publishers, New York (1997)

\bibitem{berge} Berge, C.: Topological Spaces. MacMillan, New York (1963)

\bibitem{ber} Bertsekas, D.P.: Monotone mappings with application in dynamic programming.
SIAM J. Control Optim. {\bf 15}, 438-464 (1977) 

\bibitem{bs} Bertsekas, D.P.,  Shreve, S.E.:  Stochastic Optimal Control: the  Discrete Time Case.
Academic Press, New York (1978)

\bibitem{b} Blackwell, D.: Discounted dynamic programming.
Ann.  Math. Statist. {\bf  36}, 226-235 (1965)

\bibitem{b3} Boyd, J.H.: Recursive
utility and the Ramsey problem. J. Econom. Theory {\bf 50}, 326-345  (1990)


\bibitem{bp}  Brown, L.D.,   Purves, R.: 
Measurable selections of extrema. Ann. Statist. {\bf 1},  902-912  (1973)

\bibitem{carbon} Carbon Pricing Dashborad. The World Bank. \url{ https://carbonpricingdashboard.worldbank.org/map_data}

\bibitem{d}  Denardo, E.V.: Contraction mappings in the theory underlying dynamic
programming. SIAM Rev. {\bf 9}, 165-177  (1967)

\bibitem{dg}  Dugundji, J., Granas, A.:  Fixed Point Theory, Springer-Verlag, New York (2003)

\bibitem{duran}   Dur\'an, J.: 
Discounting long run average growth in stochastic dynamic programs.
Econom. Theory {\bf 22}, 395-413   (2003)



 \bibitem{fs} Feinberg, E.A., Shwartz, A., (Eds):  
Handbook of Markov decision processes:
theory and methods. Kluwer Academic Publishers, Dordrecht, The Netherlands  (2002)


\bibitem{hll}  Hern\'andez-Lerma, O., Lasserre,  J.B.: Discrete-Time Markov Control Processes: Basic Optimality Criteria.
Springer, New York (1996)

\bibitem{hl} Hern\'andez-Lerma,  O.,  Lasserre, J.B.:  Further Topics on Discrete-Time Markov Control Processes.
Springer, New York (1999)
 
\bibitem{h} Hinderer, K.: 
Foundations of Non-Stationary Dynamic Programming with Discrete Time
Parameter. Lecture Notes in Operations Research 33, Springer-Verlag,
NY (1970) 

\bibitem{howard} Howard, R.A.: Dynamic programming and Markov processes.
The Technology Press of MIT. Cambridge, MA (1960)

\bibitem{jmn1} Ja\'skiewicz, A.,   Matkowski, J.,   Nowak, A.S.:
Persistently optimal policies in stochastic dynamic programming with generalized discounting. Math.
Oper. Res. {\bf 38},  108-121 (2013)

\bibitem{jmn2} Ja\'skiewicz, A.,   Matkowski, J.,   Nowak, A.S.:
On variable discounting in dynamic programming: applications to resource extraction and other economic models. 
Ann. Oper. Res. {\bf 220}, 263-278  (2014)

\bibitem{jmn3} Ja\'skiewicz A.,   Matkowski J.,   Nowak, A.S.:
Generalised discounting in dynamic programming with unbounded returns.
Oper. Res. Lett. {\bf 42},   231-233  (2014) 

\bibitem{jano}  Ja\'skiewicz, A.,  Nowak, A.S.:  
Stochastic games with unbounded payoffs: applications
to robust control in economics. Dyn. Games  Appl. {\bf 1},  253-279  (2011)

\bibitem{kech} Kechris,  A.S.:    Classical descriptive set theory.  New York, Springer  (1995)

\bibitem{k} Koopmans, T.C.: 
Stationary ordinal utility and impatience. Econometrica {\bf 28},  287-309  (1960)



\bibitem{m} Matkowski, J.: 
Integral solutions of functional equations. Dissertationes Math. {\bf 127}, 1-68  (1975)

\bibitem{miao} Miao, J.: Economic Dynamics in Discrete Time. MIT Press, Cambridge  (2014)

 \bibitem{streu}  Ozaki H., Streufert P.A.: Dynamic programming for non-additive stochastic objectives. 
J. Math. Econom. {\bf  25}, 391-442 (1996)


\bibitem{sam}  Samuelson, P.: A note on measurement of utility. Rev. Econ. Stud. {\bf 4},  155-161  (1937)

\bibitem{schal} Sch\"{a}l M.: Conditions for optimality in dynamic programming
and for the limit of n-stage optimal policies to be optimal. Z. 
Wahrscheinlichkeitstheorie  Verwand.  Gebiete
{\bf 32}  179-196  (1975)

\bibitem{slp}  Stokey, N.L.,
Lucas,  Jr., R.E., Prescott, E.: Recursive Methods in Economic
Dynamics. Harvard University Press, Cambridge, MA (1989)  


\bibitem{vdw} van der Wal, J.:
Stochastic Dynamic Programming. Mathematical Centre Tracts, Amsterdam (1981)


\bibitem{weil}  Weil, P.:
Nonexpected utility in macroeconomics. Quart.  J.
Econom. {\bf 105},  29-42 (1990)

 \bibitem{wjmaa} Wessels, J.:  
Markov programming by successive approximations with respect to weighted supremum norms. 
J.  Math.  Anal. Appl. {\bf   58}, 326-335  (1977) 
 
\end{thebibliography}
\end{document}